\renewcommand{\mathcal}{\mathscr}
\theoremstyle{definition}
\newtheorem{ntn}{Notation}[section]
\newtheorem{dfn}[ntn]{Definition}
\theoremstyle{plain}
\newtheorem{lem}[ntn]{Lemma}
\newtheorem{prp}[ntn]{Proposition}
\newtheorem{thm}[ntn]{Theorem}
\newtheorem{cor}[ntn]{Corollary}
\theoremstyle{remark}
\newtheorem{rmk}[ntn]{Remark}
\newtheorem{exa}[ntn]{Example}
\numberwithin{equation}{section}
\newcommand{\ideal}[1]{{\left\langle#1\right\rangle}}
\newcommand{\into}{\hookrightarrow}
\newcommand{\onto}{\twoheadrightarrow}
\newcommand{\p}{\partial}
\newcommand{\dt}[1][]{\ifthenelse{\equal{#1}{}}{\p_t}{\p_{t_{#1}}}}
\newcommand{\mm}{\mathfrak{m}}
\newcommand{\pp}{\mathfrak{p}}
\newcommand{\ZZ}{\mathds{Z}}
\newcommand{\NN}{\mathds{N}}
\newcommand{\eps}{\varepsilon}
\newcommand{\wt}{\widetilde}
\DeclareMathOperator{\ch}{char}
\DeclareMathOperator{\Der}{Der}
\DeclareMathOperator{\End}{End}
\DeclareMathOperator{\Hom}{Hom}
\begin{document}

\title[Quasihomogeneity of curves]{Quasihomogeneity of curves\\and the Jacobian endomorphism ring}

\author{Michel Granger}
\address{
M.~Granger\\
Universit\'e d'Angers, D\'epartement de Math\'ematiques\\ 
LAREMA, CNRS UMR n\textsuperscript{o}6093\\ 
2 Bd Lavoisier\\ 
49045 Angers\\ 
France
}
\email{granger@univ-angers.fr}

\author{Mathias Schulze}
\address{
M.~Schulze\\
Department of Mathematics\\
University of Kaiserslautern\\
67663 Kaiserslautern\\
Germany}
\email{mschulze@mathematik.uni-kl.de}
\thanks{The research leading to these results has received funding from the People Programme (Marie Curie Actions) of the European Union's Seventh Framework Programme (FP7/2007-2013) under REA grant agreement n\textsuperscript{o} PCIG12-GA-2012-334355.}

\date{\today}

\subjclass{14H20 (Primary) 13B22, 13H10 (Secondary)}

\keywords{curve, singularity, Gorenstein, quasihomogeneity, normalization}

\begin{abstract}
We give a quasihomogeneity criterion for Gorenstein curves.
For complete intersections, it is related to the first step of Vasconcelos' normalization algorithm.
In the process, we give a simplified proof of the Kunz--Ruppert criterion.
\end{abstract}

\maketitle
\tableofcontents

\section{Introduction}

We consider a reduced algebroid curve $X$ over an algebraically closed field $k$ of characteristic $0$ with coordinate ring $A$.
The Jacobian ideal of $X$ is the $1$st Fitting ideal $J_A:=F^1_A(\Omega^1_A)$, where $\Omega^1_A$ is the universally finite derivation of $A$. 
Lipman~\cite{Lip69} showed that $X$ is smooth if and only if $J_A$ is principal.
Based on this equivalence, Vasconcelos~\cite{Vas91} showed that the normalization of $X$ is obtained by repeatedly replacing $A$ by the endomorphism ring 
\[
\End_A(J_A^{-1})=(J_AJ_A^{-1})^{-1}.
\]
Not much is known about these endomorphism rings.
As a coarse measure for the ``amount of normality'' achieved by this operation we consider the length
\[
\rho_X:=\ell(\End_A(J_A^{-1})/A).
\]
Smoothness of $X$ is equivalent to $\rho_X=0$ and it is natural to ask when $\rho_X=1$.
We shall answer this question for complete intersection curves.

By definition, $X$ is \emph{quasihomogeneous} if the kernel of some epimorphism 
\[
k[[x_1,\dots,x_n]]\onto A
\]
is a quasihomogeneous ideal with respect to some positive weights on the variables.

The main result of this article is the following quasihomogeneity criterion.

\begin{thm}\label{0}
Let $X$ be a non-smooth complete intersection algebroid curve over a field $k=\bar k$ with $\ch k=0$.
Then $X$ is quasihomogeneous if and only if $\rho_X=1$.
\end{thm}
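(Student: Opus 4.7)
The plan is to convert both ``$\rho_X=1$'' and the quasihomogeneity condition into statements about the dualizing module $\omega_A$, and to compare them via the Kunz--Ruppert criterion that the paper re-proves.

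First, I would exploit that the complete intersection curve $A$ is Gorenstein, so $\omega_A$ is an invertible $A$-module realisable as a fractional ideal inside the total quotient ring $Q(A)$. The fundamental-class map $c\colon\Omega^1_A\to\omega_A$, with image $dA\cdot\omega_A$, has cokernel controlled by the Jacobian ideal. The goal of this step is an identification relating $J_A$ to $dA\cdot\omega_A$ inside $\omega_A$ and, dually, $J_A^{-1}$ to $(\omega_A:dA\cdot\omega_A)$. Inverting once more, $\End_A(J_A^{-1})=(J_AJ_A^{-1})^{-1}$ becomes the subring of $\bar A$ that stabilises $dA\cdot\omega_A$ inside $\omega_A$, and $\rho_X$ becomes the length of a concrete quotient built from $\omega_A/(dA\cdot\omega_A)$.

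Second, the Kunz--Ruppert criterion identifies quasihomogeneity of a Gorenstein curve with the vanishing of a distinguished class in $\omega_A/(dA\cdot\omega_A)$, equivalently with the existence of an Euler derivation $\theta\in\Der_k(A)$. For the direction ``quasihomogeneous $\Rightarrow\rho_X=1$'', such a $\theta$ produces an element $e_\theta\in\End_A(J_A^{-1})\setminus A$ with $\mm e_\theta\subseteq A$, and the Euler relation $\sum_iw_ix_i\p_if_j=d_jf_j$ on the defining equations shows directly that $e_\theta$ spans the entire quotient $\End_A(J_A^{-1})/A$.

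The converse is the main obstacle. Assuming $\rho_X=1$, I would choose any generator $e$ of $\End_A(J_A^{-1})/A$ (so $\mm e\subseteq A$) and, through the duality of the first step, convert it into a candidate derivation $\theta_e\in\Der_k(A)$. The delicate point is to prove that $\theta_e$ is an honest Euler derivation, i.e.\ that it acts on $\mm/\mm^2$ with positive integer eigenvalues. My plan here is to combine the complete-intersection presentation of $\Der_k(A)$ by the transpose Jacobian matrix with the rigidity afforded by the length-one condition: up to scalar, the Euler class is the only source of a nontrivial element of $\End_A(J_A^{-1})/A$, so $\theta_e$ must coincide with an Euler derivation, whereupon the Kunz--Ruppert criterion delivers the quasihomogeneity.
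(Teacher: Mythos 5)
Your first step and your forward direction essentially track the paper: identifying $J_A$ with $M_A=At\dt\mm_A$ via the fundamental class (the paper's Proposition~\ref{1} and Corollary~\ref{26}, resting on Piene's result that the trace map induces a surjection $\Omega_A^1\onto J_A$ with torsion kernel for complete intersections) reduces everything to $\rho_X'=\ell(\End_A(M_A^{-1})/A)$, and quasihomogeneity $\Rightarrow\rho_X=1$ then follows because the Kunz--Ruppert criterion gives $\mm_A\cong t\dt\mm_A=M_A$, whence $\End_A(J_A^{-1})=\End_A(M_A)=\End_A(\mm_A)=\mm_A^{-1}$ has colength $1$ over $A$ by the Gorenstein property. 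Note that the needed upper bound here comes from $\ell(\mm_A^{-1}/A)=1$ (Cohen--Macaulay type one), not ``directly'' from the Euler relation on the defining equations.

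The converse, however, has a genuine gap, and it sits exactly where you place ``the main obstacle''. From $\rho_X=1$ and the chain $A\subseteq\End_A(\mm_A)\subseteq\End_A(M_A)$, whose first inclusion has colength one by Gorensteinness, one only obtains the equality of endomorphism rings $\End_A(\mm_A)=\End_A(M_A)$. Your route from a generator $e$ of $\End_A(J_A^{-1})/A$ to an Euler derivation argues that ``up to scalar, the Euler class is the only source of a nontrivial element of $\End_A(J_A^{-1})/A$, so $\theta_e$ must coincide with an Euler derivation''---but this presupposes that an Euler class exists, which is precisely what is to be proved. A priori a non-quasihomogeneous complete intersection could satisfy $\rho_X=1$ with the generator corresponding to a derivation that is not diagonalizable with positive eigenvalues, and nothing in your outline excludes this. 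The paper's actual bridge is combinatorial: Delgado's symmetry of the value semigroup of a Gorenstein curve is used to show that $\End_A(\mm_A)=\End_A(M_A)$ forces $\Gamma(\mm_A)=\Gamma(M_A)$ (Proposition~\ref{17}, via the analysis of $\Delta(\tau)$ in Lemmas~\ref{24} and~\ref{25}), and an approximation argument modulo the conductor (Lemma~\ref{21}, Proposition~\ref{16}) then upgrades the equality of value semigroups to the equality of modules $t\dt\mm_A=M_A$, which is the characterization of quasihomogeneity in Theorem~\ref{9}.\eqref{9c}. Some substitute for this semigroup-symmetry step is indispensable; without it the converse is not established.
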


The proof of Theorem~\ref{0} follows from Theorem~\ref{30} and Corollary~\ref{26} which rely on a study of semigroups of curves developed in Sections~\ref{14} and \ref{15}.

\section{Fractional ideals}\label{13}

Let $\pp_1,\dots,\pp_r$ be the (minimal) associated primes of $A$.
Then $A_i:=A/\pp_i$, $i=1,\dots,r$, are the coordinate rings of the branches of $X$.
By reducedness of $A$, \cite[Cor.~2.1.13]{HS06}, Serre's normality criterion and Cohen's structure theorem, the normalization $\tilde A$ of $A$ in the total ring of fractions $L:=Q(A)$ factorizes as 
\begin{equation}\label{11}
A\into\prod_{i=1}^rA_i\into\prod_{i=1}^r\tilde A_i=\tilde A\into\prod_{i=1}^rL_i=L,\quad\tilde A_i=k[[t_i]],\quad L_i=Q(A_i)=Q(\tilde A_i)
\end{equation}
and we identify $A$ and $\tilde A$ with their images in $L$.
For $\alpha=(\alpha_1,\dots,\alpha_r)\in\ZZ^r$, we shall abbreviate $t^\alpha:=(t_1^{\alpha_1},\dots,t_r^{\alpha_r})$, $\dt:=(\dt[1],\dots,\dt[r])$, and $t^\alpha\dt:=(t_1^{\alpha_1}\dt[1],\dots,t_r^{\alpha_r}\dt[r])$.

Recall that a \emph{fractional ideal} is a finite $A$-submodule of $L$ containing a non-zero divisor of $A$. 
For any $A$-module $I$, we denote the dual by
\[
I^{-1}:=\Hom_A(I,A)
\] 
It is easy to prove the following well-known statement.

\begin{lem}\label{3}
For each two fractional ideals $I_1$ and $I_2$, also 
\begin{equation}\label{4}
\Hom_A(I_1,I_2)=\{x\in L\mid xI_1\subseteq I_2\}
\end{equation}
is again a fractional ideal.
In particular, this applies to the dual $I^{-1}$.
Moreover, $I\mapsto I^{-1}$ is inclusion reversing.
\end{lem}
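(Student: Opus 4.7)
The plan is to verify the equality in~\eqref{4}, then the fractional-ideal property of the resulting module, and finally to derive the two remaining claims. For~\eqref{4}, the inclusion $\supseteq$ is clear: any $x\in L$ with $xI_1\subseteq I_2$ yields an $A$-linear map by multiplication. For $\subseteq$, I would use that $I_1$ contains a non-zero divisor $a$ of $A$ by definition of a fractional ideal. Given $\varphi\in\Hom_A(I_1,I_2)$, $A$-linearity produces $a\varphi(b)=\varphi(ab)=b\varphi(a)$ for every $b\in I_1$; since $a$ is a non-zero divisor in $A$, hence in $L$, one may set $x:=\varphi(a)/a\in L$ and conclude $\varphi(b)=xb$, so $xI_1\subseteq I_2$. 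The same identity shows $x$ is independent of the chosen $a$, so this gives a well-defined identification of $\Hom_A(I_1,I_2)$ with a subset of $L$.

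Next, to see that the resulting set is a fractional ideal, I would check both requirements. For finiteness, with $a\in I_1$ as above, multiplication by $a$ embeds $\Hom_A(I_1,I_2)$ into $I_2$, hence $\Hom_A(I_1,I_2)\subseteq a^{-1}I_2$, which is a finite $A$-module; since $A$ is noetherian, $\Hom_A(I_1,I_2)$ is finite as well. For the existence of a non-zero divisor of $A$ inside this module, pick a non-zero divisor $b\in I_2$ of $A$ together with a common denominator $c\in A$ of the finitely many generators of $I_1\subseteq L$, that is, a non-zero divisor of $A$ with $cI_1\subseteq A$. Then $bc\cdot I_1\subseteq bA\subseteq I_2$, so $bc\in\Hom_A(I_1,I_2)$, and $bc$ is a non-zero divisor of $A$ as the product of two such.

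Specialising $I_2=A$ immediately yields that $I^{-1}$ is a fractional ideal. Monotonicity follows directly from~\eqref{4}: if $I\subseteq J$ and $x\in J^{-1}$, then $xI\subseteq xJ\subseteq A$, so $x\in I^{-1}$, giving $J^{-1}\subseteq I^{-1}$. I do not foresee a real obstacle; the whole argument is a short unwinding of the definition of fractional ideal, the only genuinely substantive step being the identification of each $A$-linear map with multiplication by a single element of $L$, for which the existence of a non-zero divisor in $I_1$ is essential.
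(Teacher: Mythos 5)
Your argument is correct and is precisely the standard proof the paper has in mind --- the paper itself gives no proof, remarking only that the statement is ``easy to prove'' and well-known. One small point of rigor: the middle equality $\varphi(ab)=b\varphi(a)$ is not an instance of $A$-linearity when $b\in I_1\setminus A$; to justify it, write $b=p/s$ with $p,s\in A$ and $s$ a non-zero divisor, note $sb=p\in A\cap I_1$, and compute $s\,a\varphi(b)=a\varphi(sb)=sb\,\varphi(a)$ before cancelling the non-zero divisor $s$ in $L$ (equivalently, extend $\varphi$ to the $L$-linear endomorphism of $I_1\otimes_A L\cong L$).
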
  

\begin{exa}\label{10}\
\begin{asparaenum}

\item\label{10a} The maximal ideal $\mm_A$ and the Jacobian ideal $J_A$ of $A$ and the normalization $\tilde A$ are fractional ideals.

\item\label{10b} The conductor 
\[
C_A:=\tilde A^{-1}
\]
is a fractional ideal by Lemma~\ref{3} and \eqref{10a}.
It is the largest ideal of $\tilde A$ which is also an ideal of $A$.
Since $\tilde A$ is a product of principal ideal domains and $A$ is not smooth, 
\begin{equation}\label{20}
C_A=\ideal{t^\delta}\subseteq\mm_A
\end{equation}
is generated by a monomial.

\item\label{10c} The module 
\[
M_A:=At\dt\mm_A
\]
is a fractional ideal that is related to quasihomogeneity of $X$.

\item\label{10d} For any fractional ideal $M$, $\End_A(M)$ is a fractional ideal and 
\[
A\subseteq\End_A(M)\subseteq\tilde A
\]
since the characteristic polynomial of an endomorphism is a relation of integral dependence by the Cayley--Hamilton theorem. 

\end{asparaenum}
\end{exa}

We shall see that $J_A$ is related to $M_A$ and define
\[
\rho_X':=\ell(\End_A(M_A^{-1})/A).
\]

\begin{rmk}
Note that for $X$ smooth we have $\rho_X=0=\rho_X'$.
\end{rmk}

From now on we assume that $X$ is not smooth.

\begin{lem}\label{2}
For any non-principal fractional ideal $I$, we have $I^{-1}=\Hom_A(I,\mm_A)$ as fractional ideals.
In particular, $\mm_A^{-1}=\End_A(\mm_A)$.
\end{lem}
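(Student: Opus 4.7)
The plan is to work at the level of subsets of $L$ via Lemma~\ref{3}, which identifies $I^{-1}$ with $\{x\in L\mid xI\subseteq A\}$ and $\Hom_A(I,\mm_A)$ with $\{x\in L\mid xI\subseteq\mm_A\}$. The inclusion $\Hom_A(I,\mm_A)\subseteq I^{-1}$ is then immediate from $\mm_A\subseteq A$, so only the reverse inclusion requires work.

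For the reverse inclusion, I would argue by contrapositive. Suppose $x\in I^{-1}$ satisfies $xI\not\subseteq\mm_A$. Then $xI$ is an ideal of $A$ that meets $A\setminus\mm_A$, so, $A$ being local, $xI$ contains a unit and hence equals $A$. The plan is to conclude that $I$ must then be principal, contradicting the hypothesis: pick $a\in I$ with $xa=1$, so that $x=a^{-1}$ in $L$, and
\[
I=x^{-1}\cdot xI=x^{-1}A=aA.
\]

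The crux of the argument—not really an obstacle, but the pivotal step—is this passage from the identity $xI=A$ to a distinguished generator $a$ of $I$, made possible by interpreting Hom-sets inside $L$ via Lemma~\ref{3}. Once the main statement is in hand, the ``in particular'' part is the case $I=\mm_A$, which applies provided $\mm_A$ itself is non-principal. This is exactly where the standing assumption from the paragraph preceding the lemma enters: if $\mm_A$ were principal, then the one-dimensional reduced local ring $A$ would be a discrete valuation ring, so $X$ would be smooth, contrary to hypothesis.
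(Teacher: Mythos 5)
Your proof is correct, and it reaches the same pivot point as the paper's --- namely, that a homomorphism $\phi\colon I\to A$ whose image is not contained in $\mm_A$ must be surjective because $A$ is local, and that such a surjection forces $I$ to be principal --- but it discharges that last step by a different mechanism. The paper normalizes to $I\subseteq A$, splits the surjection $I\onto A$ off the free module $A$ as $I=Ax\oplus I'$, and kills $I'$ using that $x$ is a non-zero divisor; you instead invoke Lemma~\ref{3} to realize $\phi$ as multiplication by some $x\in L$ with $xI=A$, extract $a\in I$ with $xa=1$, and conclude $I=x^{-1}(xI)=aA$ directly. Your route is shorter and avoids the splitting argument entirely, at the cost of leaning on the identification of $\Hom$-sets with subsets of $L$ (which the lemma's phrase ``as fractional ideals'' presupposes anyway, so nothing is lost). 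Your treatment of the ``in particular'' clause also matches the paper's: it silently uses the standing assumption that $X$ is not smooth, and your justification that a principal $\mm_A$ would make $A$ a discrete valuation ring is exactly the reason the paper has in mind.
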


\begin{proof}
After multiplying by a unit in $L$, we may assume that $I\subseteq A$.
Any surjection $\phi\colon I\onto A$ would have to split as $I=A\cdot x\oplus I'$ with $x\in I$ a non-zero divisor and $I'$ an $A$-module.
For $x'\in I'$, we have $x'x\in Ax\cap I'=0$ and hence $x'=0$.
But then $I$ would be principal contradicting to the assumption.
Thus, any $\phi\colon I\to A$ must map to $\mm_A$ and the first claim follows.
The second claim is due our assumption that $X$ is not smooth. 
\end{proof}

We denote by $-^\vee:=\Hom_A(-,\omega_A^1)$ the dualizing functor.
From now on we assume that $A$ is Gorenstein.
Then $\omega_A^1\cong A$ and hence 
\begin{equation}\label{5}
-^{-1}\cong-^\vee
\end{equation}
is an involution on fractional ideals.

\begin{lem}\label{7}
For every fractional ideal $I$, we have $\End_A(I)=\End_A(I^{-1})$ as fractional ideals.
\end{lem}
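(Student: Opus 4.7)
The plan is to work with the concrete description of these endomorphism rings as subsets of $L$ provided by Lemma~\ref{3}:
\[
\End_A(I)=\{x\in L\mid xI\subseteq I\},\qquad\End_A(I^{-1})=\{x\in L\mid xI^{-1}\subseteq I^{-1}\}.
\]
With this reformulation the problem reduces to a direct set-theoretic comparison inside $L$, so no abstract $\Hom$-manipulations are needed.

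First, I would establish the inclusion $\End_A(I)\subseteq\End_A(I^{-1})$. Pick $x\in L$ with $xI\subseteq I$ and let $y\in I^{-1}=\Hom_A(I,A)$, which by Lemma~\ref{3} means $yI\subseteq A$. Then, using commutativity of multiplication in $L$,
\[
(xy)I=y(xI)\subseteq yI\subseteq A,
\]
so $xy\in I^{-1}$; hence $xI^{-1}\subseteq I^{-1}$, as wanted.

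Next, for the reverse inclusion, the natural attempt is simply to apply the previous argument to $I^{-1}$ in place of $I$: given $x\in\End_A(I^{-1})$, the same computation shows $x(I^{-1})^{-1}\subseteq(I^{-1})^{-1}$. This is exactly where the Gorenstein hypothesis enters through \eqref{5}: since $-^{-1}\cong-^\vee$ is an involution on fractional ideals, $(I^{-1})^{-1}=I$, giving $xI\subseteq I$ and therefore $x\in\End_A(I)$.

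The only real subtlety is making sure the equality of sets (not just an abstract isomorphism of rings) is what the statement demands; since both sides are already realized as fractional ideals inside the common ring $L$ via Lemma~\ref{3}, the two inclusions above suffice. The Gorenstein biduality \eqref{5} is the essential ingredient, so the lemma would fail without the hypothesis that $A$ is Gorenstein.
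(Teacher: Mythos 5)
Your proof is correct and follows essentially the same route as the paper: both establish $\End_A(I)\subseteq\End_A(I^{-1})$ by viewing endomorphisms as multiplication by elements of $L$ (Lemma~\ref{3}), and both obtain the reverse inclusion by applying this to $I^{-1}$ together with the Gorenstein biduality $(I^{-1})^{-1}=I$ from \eqref{5}. Your explicit computation $(xy)I=y(xI)\subseteq yI\subseteq A$ is just the unwound version of the paper's remark that the same $x$ induces the dual endomorphism of $I^{-1}$.
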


\begin{proof}
By \eqref{4}, any $\phi\in\End_A(I)$ is just multiplication by some $x\in L$.
The same $x$ corresponds to $\phi^{-1}\in\End_A(I^{-1})$ and hence $\End_A(I)\subseteq\End_A(I^{-1})$.
By \eqref{5}, the claim follows by applying the above argument to $I^{-1}$ instead of $I$.
\end{proof}

\section{Quasihomogeneity of curves}\label{31}

In the following theorem, we summarize several versions of the Kunz--Ruppert criterion for quasihomogeneity of curves.
The original formulation is the equivalence \eqref{9a} $\Leftrightarrow$ \eqref{9d} $\Leftrightarrow$ \eqref{9b}.
In the appendix, we comment on a possible issue in its proof by Kunz--Ruppert~\cite{KR77} and give a simplified argument.

The equivalence with \eqref{9c} originates from work of Greuel--Martin--Pfister~\cite[Satz~2.1]{GMP85} extending the criterion by a numerical characterization of quasihomogeneity, in case of Gorenstein curves.
The implication (5) $\Rightarrow$ (6) in their main result was generalized by Kunz--Waldi~\cite[Thm.~6.21]{KW88} by comparing two modules:
\begin{asparaenum}
\item The module of \emph{Zariski differentials}, which is the reflexive hull $((\Omega_A^1)^{-1})^{-1}$ of $\Omega_A^1$.
By the universal property of $\Omega_A^1$, $(\Omega_A^1)^{-1}=\Der_k(A)$ and hence
\begin{equation}\label{6}
((\Omega_A^1)^{-1})^{-1}=\Der_k(A)^{-1}=(\Hom_A(A\dt A,A)\dt)^{-1}=((A\dt\mm_A)^{-1}\dt)^{-1}=M_A\frac{dt}{t}
\end{equation}
\item The module of \emph{exact differentials} $c_AdA=\dt\mm_Adt$ where
\begin{equation}\label{29}
c_A\colon\Omega_A^\bullet\to\omega_A^\bullet
\end{equation}
is the \emph{trace map} into the \emph{regular differential forms} on $A$, which are certain meromorphic forms satisfying $\omega_A^\bullet\cong(\Omega_A^{1-\bullet})^\vee$ (see \cite{Ker84,KW88}).
\end{asparaenum}

\begin{thm}[Kunz--Ruppert--Waldi]\label{9}
The following statements are equivalent:
\begin{asparaenum}
\item\label{9a} The curve $X$ is quasihomogeneous.
\item\label{9d} For some derivation $\chi\in\Der_k(A)$, $A\cdot\chi(A)=\mm_A$.
\item\label{9b} Multiplication by some unit in $\tilde A$ induces an isomorphism $\mm_A\cong t\dt\mm_A$. 
\item\label{9c} Every Zariski differential is exact, that is, $t\dt\mm_A=M_A$.
\end{asparaenum}
\end{thm}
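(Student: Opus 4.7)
The plan is to prove the cycle $\eqref{9a}\Rightarrow\eqref{9c}\Rightarrow\eqref{9b}\Rightarrow\eqref{9d}\Rightarrow\eqref{9a}$, with the last implication being the classical Kunz--Ruppert theorem, deferred to the appendix.

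For $\eqref{9a}\Rightarrow\eqref{9c}$, I lift a quasihomogeneous presentation $A\cong k[[x_1,\dots,x_n]]/I$ with positive integer weights $w_j$ on $x_j$. The Euler derivation $\chi=\sum_j w_j x_j\partial_{x_j}$ preserves $I$ and descends to $A$; since $\chi(f)=(\deg f)\cdot f$ on weight-homogeneous $f\in\mm_A$ with all weights positive, $\chi$ is a $k$-linear bijection of $\mm_A$. Extending $\chi$ uniquely to $\Der_k(\tilde A)=\prod_i k[[t_i]]\p_{t_i}$ yields $\chi=u\cdot t\dt$ for some $u\in\tilde A$, and a branchwise chain-rule computation against the parameterizations of the $x_j$ shows $u\in\tilde A^*$. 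Then $u\cdot t\dt\mm_A=\chi(\mm_A)=\mm_A$, so $t\dt\mm_A=u^{-1}\mm_A$ is already an $A$-module; thus $M_A=A\cdot t\dt\mm_A=t\dt\mm_A$, which is \eqref{9c}.

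For $\eqref{9c}\Rightarrow\eqref{9b}$, I observe that $\mm_A$ and $M_A=t\dt\mm_A$ are rank-one fractional ideals with identical value semigroups on each branch, since $v_i(t\dt x)=v_i(x)$ for $x\in\mm_A$ in characteristic zero. Combined with the Gorenstein duality $-^{-1}\cong-^\vee$, this yields an $A$-linear isomorphism $\mm_A\cong M_A$, necessarily given by multiplication by some $u\in L^*$ with $u\mm_A=M_A$; matching minimal valuations on each branch forces $u\in\tilde A^*$, so $u\mm_A=M_A=t\dt\mm_A$, which is \eqref{9b}. For $\eqref{9b}\Rightarrow\eqref{9d}$, the derivation $\chi:=u^{-1}t\dt$ on $\tilde A$ satisfies $\chi(\mm_A)=u^{-1}\cdot u\mm_A=\mm_A\subseteq A$, so it restricts to a derivation of $A$ with $A\chi(A)=A\mm_A=\mm_A$.

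The main obstacle is the closing implication $\eqref{9d}\Rightarrow\eqref{9a}$: from $\chi\in\Der_k(A)$ with $A\chi(A)=\mm_A$, one must produce a quasihomogeneous presentation of $A$. The strategy is to show that the induced $k$-linear endomorphism $\bar\chi$ on $\mm_A/\mm_A^2$ is semisimple with positive rational eigenvalues, then formally conjugate $\chi$ to its semisimple linear part via an automorphism of $A$; the eigenbasis of the normalized derivation provides a quasihomogeneous system of generators. This is the substance of the Kunz--Ruppert theorem, whose proof---simplified to address a subtlety in the original argument---is the content of the appendix.
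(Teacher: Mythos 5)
Your cycle \eqref{9a}$\Rightarrow$\eqref{9c}$\Rightarrow$\eqref{9b}$\Rightarrow$\eqref{9d}$\Rightarrow$\eqref{9a} is structurally reasonable, and three of the four arrows are sound: the Euler-derivation argument for \eqref{9a}$\Rightarrow$\eqref{9c} (which in fact also yields \eqref{9b} and \eqref{9d} directly, since $t\dt\mm_A=u^{-1}\mm_A$ there), the construction $\chi=u^{-1}t\dt$ for \eqref{9b}$\Rightarrow$\eqref{9d}, and the deferral of \eqref{9d}$\Rightarrow$\eqref{9a} to the appendix, which is exactly the one implication the paper reproves itself (Proposition~\ref{34}); the remaining equivalences are treated in the paper by citation to Kunz--Ruppert and to Greuel--Martin--Pfister/Kunz--Waldi.

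The genuine gap is in \eqref{9c}$\Rightarrow$\eqref{9b}. You assert that because $\mm_A$ and $M_A=t\dt\mm_A$ have equal value semigroups, ``Gorenstein duality'' produces an $A$-linear isomorphism $\mm_A\cong M_A$. Equality of value semigroups does \emph{not} imply isomorphism of fractional ideals, even for complete intersection curves: for $A=k[[t^4,t^5]]$ the fractional ideals $I_1=At^4+At^6$ and $I_2=At^4+A(t^6+t^7)$ both have value set $\{4,6\}\cup(8+\NN)$, yet any unit $\eps$ with $\eps I_1=I_2$ would need $\eps t^4\in I_2$ (forcing the $t$-coefficient of $\eps$ to vanish, since no element of $I_2$ has a $t^5$-term) and $\eps t^6\in I_2$ (impossible, since every element of $I_2$ has equal $t^6$- and $t^7$-coefficients while $\eps t^6$ then has $t^7$-coefficient $0$). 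Indeed the paper's appendix is devoted to precisely this pitfall: $A\cdot t\p_t(A)\not\cong\mm_A$ for $x^4+xy^4+y^5=0$ even though the relevant valuations match, which is the flaw it identifies in the original Kunz--Ruppert argument. The implication \eqref{9c}$\Rightarrow$\eqref{9a} (equivalently, given your other arrows, \eqref{9c}$\Rightarrow$\eqref{9b}) is the substantive Gorenstein-specific content of the theorem, due to Greuel--Martin--Pfister and Kunz--Waldi; any proof must genuinely use the symmetry of $\Gamma_A$ in the sense of Kunz/Delgado, in the spirit of Sections~\ref{14} and \ref{15}, and it is not a formal consequence of $-^{-1}\cong-^\vee$. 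As written, your argument establishes the equivalence of \eqref{9a}, \eqref{9b}, \eqref{9d} (granting the appendix) together with \eqref{9a}$\Rightarrow$\eqref{9c}, but leaves the converse of the latter unproved.
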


\begin{cor}\label{28}
If $X$ is quasihomogeneous then $\rho_X'=1$. 
\end{cor}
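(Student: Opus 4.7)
The plan is to reduce $\rho_X'$ to the length $\ell(\mm_A^{-1}/A)$ and then to invoke Gorenstein duality to identify this length with the Cohen--Macaulay type of $A$, which equals one.

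First I would apply Theorem~\ref{9}: part~\eqref{9c} gives $M_A=t\dt\mm_A$, and part~\eqref{9b} provides a unit $u\in\tilde A$ whose action by multiplication defines an $A$-linear isomorphism $\mm_A\cong t\dt\mm_A=M_A$ inside $L$. Two fractional ideals differing by multiplication by an element of $L$ have the same endomorphism ring (by the description in Lemma~\ref{3}), so $\End_A(M_A)=\End_A(\mm_A)$. Since $X$ is not smooth, $\mm_A$ is non-principal and Lemma~\ref{2} gives $\End_A(\mm_A)=\mm_A^{-1}$. Combined with Lemma~\ref{7},
\[
\End_A(M_A^{-1})=\End_A(M_A)=\mm_A^{-1},
\]
so $\rho_X'=\ell(\mm_A^{-1}/A)$.

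Next, to evaluate $\ell(\mm_A^{-1}/A)$ I would apply $\Hom_A(-,A)$ to the short exact sequence $0\to\mm_A\to A\to A/\mm_A\to 0$. Since $A$ is Cohen--Macaulay of dimension one, $\mm_A$ contains a non-zero-divisor, and therefore $\Hom_A(A/\mm_A,A)=0$. The long exact sequence then reduces to
\[
0\to A\to\mm_A^{-1}\to\Ext^1_A(A/\mm_A,A)\to 0.
\]
The Gorenstein hypothesis forces the Cohen--Macaulay type to be one, i.e.\ $\Ext^1_A(A/\mm_A,A)\cong A/\mm_A$, so $\mm_A^{-1}/A$ has length one.

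I expect no serious obstacle. The only conceptual step is the initial use of Theorem~\ref{9} to replace $M_A$ by $\mm_A$ up to a unit of $\tilde A$; the remainder is a standard application of duality for one-dimensional Gorenstein local rings.
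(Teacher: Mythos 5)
Your proposal is correct and follows essentially the same route as the paper: Theorem~\ref{9} gives $\End_A(M_A)=\End_A(\mm_A)$, Lemma~\ref{7} and Lemma~\ref{2} reduce $\rho_X'$ to $\ell(\mm_A^{-1}/A)$, and the Gorenstein hypothesis gives length one. You merely spell out the last step (via $\Ext^1_A(A/\mm_A,A)$ and the Cohen--Macaulay type) that the paper leaves implicit.
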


\begin{proof}
By Theorem~\ref{9}, we have $\End_A(M_A)=\End_A(\mm_A)$ as fractional ideals and hence $\rho_X'=\ell(\mm_A^{-1}/A)=1$ by Lemma~\ref{2} and the Gorenstein hypothesis.
\end{proof}

\begin{lem}\label{8}
We have an inclusion $\End_A(\mm_A)\subseteq\End_A(M_A)$.
\end{lem}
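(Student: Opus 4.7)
The plan is to take an arbitrary endomorphism $\phi\in\End_A(\mm_A)$, view it via Lemma~\ref{3} as multiplication by some $x\in L$ with $x\mm_A\subseteq\mm_A$ (Lemma~\ref{2}, since $X$ is assumed non-smooth), and show that $xM_A\subseteq M_A$. Since $M_A=A\cdot t\dt\mm_A$ is generated as an $A$-module by $t\dt\mm_A$ and $x$ commutes with the $A$-action, this reduces to proving
\[
x\cdot t\dt f \;\in\; M_A \qquad \text{for every } f\in\mm_A.
\]

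The key tool is the Leibniz rule for the derivation $t\dt$ of $\tilde A$. Since $xf\in x\mm_A\subseteq\mm_A$, the element $t\dt(xf)$ lies in $t\dt\mm_A\subseteq M_A$, so rewriting
\[
x\cdot t\dt f \;=\; t\dt(xf) \,-\, (t\dt x)\cdot f
\]
reduces matters to the correction term: one must show $(t\dt x)\cdot f\in M_A$.

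To make progress on this, I would apply Leibniz a second time to $xfg$ for an arbitrary auxiliary element $g\in\mm_A$. Since $xfg\in\mm_A$,
\[
t\dt(xfg) \;=\; (t\dt x)\,fg \,+\, (xg)(t\dt f) \,+\, (xf)(t\dt g) \;\in\; M_A.
\]
Using $xf,xg\in\mm_A\subseteq A$ and $t\dt f, t\dt g\in t\dt\mm_A\subseteq M_A$, the two cross-terms each lie in $A\cdot M_A=M_A$. Isolating the remaining piece gives $(t\dt x)\,fg\in M_A$ for every $g\in\mm_A$, i.e., $(t\dt x)\,f\in M_A:\mm_A$.

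The main obstacle is the final passage from $(t\dt x)f\in M_A:\mm_A$ to $(t\dt x)f\in M_A$. I would close this gap with the Gorenstein hypothesis: applying Lemma~\ref{7} to $M_A$ together with the concrete identification of $M_A\tfrac{dt}{t}$ as the reflexive hull $((\Omega^1_A)^{-1})^{-1}$ from~\eqref{6}, one controls the quotient $(M_A:\mm_A)/M_A$ finely enough to rule out any contribution outside $M_A$. This structural step, rather than the Leibniz manipulation, is where I expect the real work to lie.
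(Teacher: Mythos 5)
Your Leibniz computations are correct as far as they go, but the proof is not complete: the final step is exactly the content of the lemma and is not closed by what you write. Note first that your reformulation $(t\dt x)\cdot f\in M_A$ is \emph{equivalent} to the goal $x\cdot t\dt f\in M_A$ (they differ by $t\dt(xf)\in M_A$), so the second application of Leibniz, which only yields $(t\dt x)f\cdot\mm_A\subseteq M_A$, i.e.\ $(t\dt x)f\in\Hom_A(\mm_A,M_A)$, has strictly weakened the statement. The passage back from $\Hom_A(\mm_A,M_A)$ to $M_A$ is false for general elements: for any fractional ideal $I$ over a one-dimensional Gorenstein local ring one has $\Hom_A(\mm_A,I)\supsetneq I$ (already $I=A$ gives $\mm_A^{-1}\supsetneq A$ of colength one by Lemma~\ref{2}; for the cusp $A=k[[t^2,t^3]]$ one has $M_A=t^2A$ and $t^3\in\Hom_A(\mm_A,M_A)\setminus M_A$). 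So no ``control of the quotient $(M_A:\mm_A)/M_A$'' can do the job in the abstract --- that quotient is genuinely nonzero --- and to exploit the special form of the element $(t\dt x)f$ you would have to return to the identity $(t\dt x)f=t\dt(xf)-x\,t\dt f$, i.e.\ to the original problem. The sketch of how Lemma~\ref{7} and \eqref{6} would rule out the ``contribution outside $M_A$'' is where the entire difficulty sits, and it is left unproved.

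The missing idea, which is how the paper argues, is to dualize rather than to work with $M_A$ directly. From \eqref{6} one reads off $\Der_k(A)=M_A^{-1}\,t\dt$, and Lemma~\ref{7} (this is where Gorenstein enters) gives $\End_A(M_A)=\End_A(M_A^{-1})=\End_A(\Der_k(A))$. On the dual side the membership test is against $A$ instead of against $M_A$, and there your starting hypothesis closes the argument immediately: for $x\in\mm_A^{-1}=\End_A(\mm_A)$ and $\delta\in\Der_k(A)$ one has $(x\delta)(A)=x\,\delta(\mm_A)\subseteq x\,\mm_A\subseteq A$ (using $\delta(k)=0$ and that derivations of the singular curve map $A$ into $\mm_A$), so $x\delta\in\Der_k(A)$. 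I would either adopt this dual route or find a genuine argument for $(t\dt x)f\in M_A$ that uses more than $(t\dt x)f\mm_A\subseteq M_A$.
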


\begin{proof}
By \eqref{6} and Lemma~\ref{7}, $\Der_k(A)=M_A^{-1}\partial_t$ and hence $\End_A(M_A)=\End_A(\Der_k(A))$.
So, by Lemma~\ref{2}, it suffices to show that $\mm_A^{-1}\subseteq\End_A(\Der_k(A))$.
But any $\delta\in\Der_k(A)$ lifts uniquely to $\delta'\in\Der_k(L)$ and such a $\delta'$ is in $\Der_k(A)$ exactly if $\delta'(A)\subseteq A$.
Now, let $\phi\in\mm_A^{-1}$ be multiplication by $x\in L$.
Then $x\delta'(A)\subseteq x\mm_A\subseteq A$ since $\delta'(k)=0$.
The claim follows.
\end{proof}

By Lemma~\ref{8}, we have the following chain of fractional ideals
\[
A\subseteq\End_A(\mm_A)\subseteq\End_A(M_A)
\]
where the colength of the first inclusion equals $\ell(\mm_A^{-1}/A)=1$ by Lemma~\ref{2} and the Gorenstein hypothesis. 
This yields the following

\begin{prp}\label{12}
$\rho_X'=1$ implies that $\End_A(\mm_A)=\End_A(M_A)$.
\end{prp}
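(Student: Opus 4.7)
The plan is to show that the chain of inclusions displayed just before the proposition is in fact a chain of equalities at the second step, by a length count.

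First I would rewrite $\rho_X'$ using Lemma~\ref{7}: since $\End_A(M_A) = \End_A(M_A^{-1})$ as fractional ideals, the definition $\rho_X' = \ell(\End_A(M_A^{-1})/A)$ gives $\rho_X' = \ell(\End_A(M_A)/A)$. Under the hypothesis $\rho_X' = 1$, this means the total colength from $A$ up to $\End_A(M_A)$ is exactly $1$.

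Next I would invoke Lemma~\ref{8} to get the chain
\[
A \subseteq \End_A(\mm_A) \subseteq \End_A(M_A),
\]
and compute the colength of the first inclusion. By Lemma~\ref{2}, $\End_A(\mm_A) = \mm_A^{-1}$. Since $A$ is Gorenstein, dualization $-^{-1}$ is an involution on fractional ideals (cf.~\eqref{5}) and preserves lengths of quotients; applied to the inclusion $\mm_A \subseteq A$ of colength $1$ (because $k = \bar k$, so $A/\mm_A \cong k$), it yields $\ell(\mm_A^{-1}/A) = 1$. Hence the first inclusion already contributes colength $1$.

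Additivity of length on the chain then gives
\[
1 = \rho_X' = \ell(\End_A(\mm_A)/A) + \ell(\End_A(M_A)/\End_A(\mm_A)) = 1 + \ell(\End_A(M_A)/\End_A(\mm_A)),
\]
so $\ell(\End_A(M_A)/\End_A(\mm_A)) = 0$, forcing $\End_A(\mm_A) = \End_A(M_A)$. There is no real obstacle here: the proof is a length count, and all of the ingredients (the inclusion from Lemma~\ref{8}, the identification from Lemma~\ref{2}, the self-duality from Gorensteinness, and the reinterpretation of $\rho_X'$ via Lemma~\ref{7}) have already been established in the excerpt.
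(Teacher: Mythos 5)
Your proof is correct and is essentially the paper's own argument: the paper derives Proposition~\ref{12} from the chain $A\subseteq\End_A(\mm_A)\subseteq\End_A(M_A)$ of Lemma~\ref{8}, the colength-one computation $\ell(\mm_A^{-1}/A)=1$ via Lemma~\ref{2} and Gorensteinness, and the identification $\rho_X'=\ell(\End_A(M_A)/A)$ via Lemma~\ref{7}, exactly as you do. Your write-up merely spells out the length-preservation under dualizing a bit more explicitly than the paper does.
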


In Propositions~\ref{16} and \ref{17} in the following sections, we shall see that the equality in Proposition~\ref{12} implies that in Theorem~\ref{9}.\eqref{9c}. 
Combined with Corollary~\ref{28} this proves the following statement.

\begin{thm}\label{30}
A Gorenstein curve $X$ is quasihomogeneous if and only if $\rho'_X=1$.
\end{thm}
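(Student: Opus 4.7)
The theorem is an equivalence, and one direction is already essentially in hand. The forward direction ($X$ quasihomogeneous $\Rightarrow\rho_X'=1$) is Corollary~\ref{28}; it rests on the equality $t\partial_t\mm_A=M_A$ furnished by Theorem~\ref{9}.\eqref{9c} together with the Gorenstein hypothesis, which via Lemma~\ref{2} gives $\ell(\mm_A^{-1}/A)=1$. No further work is needed here.

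For the converse, my plan is to chain together the implications
\[
\rho_X'=1\;\stackrel{(a)}{\Longrightarrow}\;\End_A(\mm_A)=\End_A(M_A)\;\stackrel{(b)}{\Longrightarrow}\;t\partial_t\mm_A=M_A\;\stackrel{(c)}{\Longrightarrow}\;X\text{ quasihomogeneous},
\]
where $(a)$ is Proposition~\ref{12}, $(c)$ is Theorem~\ref{9}.\eqref{9c}, and $(b)$ is the crux. The inclusion $t\partial_t\mm_A\subseteq M_A$ is immediate from the definition $M_A=At\partial_t\mm_A$, so the real work is in establishing the reverse inclusion.

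I would attack $(b)$ on the level of value semigroups. Let $\Gamma\subseteq\NN^r$ denote the value semigroup of $A$; under the Gorenstein hypothesis, $\Gamma$ is symmetric in the sense of Kunz--Waldi, and this symmetry exchanges the value set of any fractional ideal with that of its dual up to a shift by the conductor exponent $\delta$ of \eqref{20}. A Leibniz-rule computation yields an a priori inclusion of value sets for $t\partial_t\mm_A\subseteq M_A$, and I would exploit the hypothesis $\End_A(\mm_A)=\End_A(M_A)$ (viewed via Lemmas~\ref{2} and~\ref{7} as the identity $\mm_A^{-1}=\End_A(M_A)=\End_A(M_A^{-1})$) to tighten this inclusion to an equality on value sets. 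A length comparison, again rooted in the Gorenstein duality and the fact that $\ell(\mm_A^{-1}/A)=1$, should then upgrade the match of value sets to an equality of the modules themselves.

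The main obstacle is precisely step $(b)$. Since $t\partial_t\mm_A$ is only a $k$-subspace of $M_A$ and not in general an $A$-submodule, the desired equality has no purely formal proof: one must genuinely analyze how the derivation $t\partial_t$ interacts with the arithmetic of $\Gamma$, and how the Gorenstein symmetry propagates from $\mm_A$ to $M_A$ under the hypothesis that their endomorphism rings coincide. This is the role of the semigroup analysis announced for Sections~\ref{14} and~\ref{15}, culminating in Propositions~\ref{16} and~\ref{17}, which I expect to package precisely the implication $(b)$.
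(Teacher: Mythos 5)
Your proposal is correct and follows essentially the same route as the paper: the forward direction is Corollary~\ref{28}, and the converse is exactly the chain $\rho_X'=1\Rightarrow\End_A(\mm_A)=\End_A(M_A)$ (Proposition~\ref{12}) $\Rightarrow\Gamma(\mm_A)=\Gamma(M_A)$ (Proposition~\ref{17}) $\Rightarrow t\dt\mm_A=M_A$ (Proposition~\ref{16}) $\Rightarrow$ quasihomogeneity (Theorem~\ref{9}.\eqref{9c}), which is precisely how the paper assembles the proof. Your step $(b)$ is indeed the content of Propositions~\ref{16} and~\ref{17}, and your sketch of it (Gorenstein symmetry of the value semigroup plus an upgrade from equal value sets to equal modules, the latter done in the paper via the conductor and Lemma~\ref{21} rather than a length count) matches the paper's strategy in all essentials.
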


Recall the following result from \cite{Sch13} which uses the coincidence of the Jacobian and the $\omega$-Jacobian ideal for complete intersections (see \cite[\S3]{OZ87}).

\begin{prp}\label{1}
If $X$ is a complete intersection then $\omega^0_A\cong J_A^{-1}$.
\end{prp}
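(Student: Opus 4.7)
The plan is to reduce, via the Gorenstein property, to exhibiting an isomorphism $(\Omega_A^1)^{-1}\cong J_A^{-1}$, and then to construct this isomorphism from the free presentation of $\Omega_A^1$ coming from a complete intersection embedding. A complete intersection is Gorenstein, so $\omega_A^1\cong A$, and the duality $\omega_A^\bullet\cong(\Omega_A^{1-\bullet})^\vee$ at $\bullet=0$ becomes $\omega_A^0\cong\Hom_A(\Omega_A^1,A)=(\Omega_A^1)^{-1}$.

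Write $A=k[[x_1,\dots,x_n]]/I$ with $I$ generated by a regular sequence $f_1,\dots,f_{n-1}$, so that $I/I^2$ is a free $A$-module of rank $n-1$. The conormal sequence
\[
0\to I/I^2\to \Omega^1_{k[[x_1,\dots,x_n]]}\otimes_{k[[x_1,\dots,x_n]]}A\to \Omega_A^1\to 0
\]
then furnishes a finite free presentation $A^{n-1}\xrightarrow{\Phi}A^n\to\Omega_A^1\to 0$, in which $\Phi$ is the transpose of the Jacobian matrix $(\partial f_j/\partial x_i)$. Its maximal minors $\Delta_1,\dots,\Delta_n$ generate $J_A=F^1_A(\Omega_A^1)$. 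Applying $\Hom_A(-,A)$ identifies $(\Omega_A^1)^{-1}$ with $\ker(\Phi^T\colon A^n\to A^{n-1})$.

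The Cramer vector $\sigma:=(\Delta_1,-\Delta_2,\dots,(-1)^{n-1}\Delta_n)$ lies in this kernel, by the determinantal identity from expanding along a repeated row. The assignment $x\mapsto x\sigma$ then defines an $A$-linear map $J_A^{-1}\to(\Omega_A^1)^{-1}$ which is well-defined (each $x\Delta_j$ lies in $A$) and injective (as $J_A$ contains a non-zero-divisor of $A$). The content of the proposition is the surjectivity of this map --- that every syzygy among the rows of $\Phi^T$ is a multiple of $\sigma$ --- which is the coincidence of the Jacobian and $\omega$-Jacobian ideals for complete intersections, as established in \cite[\S3]{OZ87}.

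The main obstacle is precisely this surjectivity step, and it is where the complete intersection hypothesis enters essentially: it reflects a rigidity of the Jacobian syzygies that fails for general Gorenstein curves. Once it is granted, the desired isomorphism $\omega_A^0\cong J_A^{-1}$ follows by assembling the duality, the presentation, and Cramer's rule.
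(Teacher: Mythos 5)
Your argument is correct, but it takes a genuinely more explicit route than the paper's. The paper's proof is a two-line dualization: by Piene's result the trace map $c_A$ induces a surjection $\Omega_A^1\onto J_A$ with torsion kernel, and applying $-^\vee$ together with $\omega_A^1\cong A$ immediately gives $\omega_A^0\cong(\Omega_A^1)^\vee\cong J_A^\vee\cong J_A^{-1}$. You instead dualize the free presentation $A^{n-1}\xrightarrow{\Phi}A^n\to\Omega_A^1\to0$, identify $(\Omega_A^1)^{-1}$ with $\ker(\Phi^T)$, and realize the isomorphism with $J_A^{-1}$ concretely via the Cramer vector $\sigma$ of signed maximal minors. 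The two proofs are dual to one another --- in the coordinates of your presentation the paper's surjection is $dx_j\mapsto\pm\Delta_j$ and your map $x\mapsto x\sigma$ is precisely its dual --- and both ultimately outsource the essential input (the coincidence of the Jacobian and $\omega$-Jacobian ideals for complete intersections) to the literature, so your citation of Oneto--Zatini plays the role of the paper's citation of Piene. What your version buys is an explicit formula for the isomorphism; what it costs is a little bookkeeping (and note that ``syzygies among the rows of $\Phi^T$'' should read ``elements of $\ker(\Phi^T\colon A^n\to A^{n-1})$'', with the multiplier $x$ allowed in $L$, not just in $A$). One remark: the surjectivity you single out as the main obstacle actually admits a short direct argument and need not be taken on faith. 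Since $X$ is reduced and $\ch k=0$, $\Omega_A^1\otimes_AL_i$ has rank one for each branch, so $\Phi^T\otimes L_i$ has rank $n-1$, some maximal minor is a unit in $L_i$, and $\ker(\Phi^T\otimes L_i)=L_i\cdot\sigma$; hence any $v\in\ker(\Phi^T)\subseteq A^n$ equals $x\sigma$ for a unique $x\in L$, and $x\Delta_j=\pm v_j\in A$ forces $x\in J_A^{-1}$.
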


\begin{proof}
By hypothesis and \cite[Prop.~1]{Pie79}, \eqref{29} induces a surjection $\Omega_A^1\onto J_A$ with torsion kernel.
The claim follows by dualizing.
\end{proof}

In the situation of Proposition~\ref{1}, \eqref{6} yields
\[
J_A\cong(\omega_A^0)^{-1}\cong((\Omega_A^1)^\vee)^{-1}\cong((\Omega_A^1)^{-1})^{-1}= M_A
\]
and, by Lemma~\ref{7}, we deduce the following statement.

\begin{cor}\label{26}
If $X$ is a complete intersection curve then $\End_A(J_A)=\End_A(M_A)$ and, in particular, $\rho_X=\rho_X'$.
\end{cor}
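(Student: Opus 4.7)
The plan is to show that $J_A$ and $M_A$ are isomorphic as fractional ideals, differing only by a unit in $L$, and then conclude by comparing endomorphism rings. Almost all the ingredients are assembled in the paragraph preceding the statement; I just need to organize them carefully.

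First I would apply Proposition~\ref{1}, whose hypothesis holds by assumption, to obtain $\omega_A^0\cong J_A^{-1}$. Since complete intersections are Gorenstein, the involution~\eqref{5} is available, and dualizing gives $J_A\cong(\omega_A^0)^{-1}$. Next I would invoke the duality $\omega_A^\bullet\cong(\Omega_A^{1-\bullet})^\vee$ recalled before Theorem~\ref{9}, which in degree zero reads $\omega_A^0\cong(\Omega_A^1)^\vee$, and apply~\eqref{5} once more to identify $(-)^\vee$ with $(-)^{-1}$. The chain closes via the explicit computation~\eqref{6}, which realizes the reflexive hull $((\Omega_A^1)^{-1})^{-1}$ as $M_A\frac{dt}{t}$, i.e.\ as $M_A$ up to an invertible twist.

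With the resulting isomorphism $J_A\cong M_A$ of fractional ideals in hand, I would conclude by applying Lemma~\ref{7} on both sides:
\[
\End_A(J_A^{-1})=\End_A(J_A)=\End_A(M_A)=\End_A(M_A^{-1}),
\]
where the middle equality uses that by~\eqref{4} the endomorphism ring depends only on the isomorphism class of a fractional ideal in $L$. Taking $A$-colengths then yields $\rho_X=\rho_X'$.

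The main obstacle I anticipate is the bookkeeping around the chain of isomorphisms: each of them is \emph{a priori} only an abstract $A$-linear isomorphism, whereas the target statement $\End_A(J_A)=\End_A(M_A)$ is about fractional ideals inside $L$. The bridge is the standard observation that any isomorphism between two fractional ideals in $L$ is given by multiplication by a unit of $L$, and such a scaling leaves the ring $\{x\in L\mid xI\subseteq I\}$ invariant; in particular, the factor $\frac{dt}{t}$ appearing in~\eqref{6} is harmless.
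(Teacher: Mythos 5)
Your proposal is correct and follows essentially the same route as the paper: the chain $J_A\cong(\omega_A^0)^{-1}\cong((\Omega_A^1)^\vee)^{-1}\cong((\Omega_A^1)^{-1})^{-1}=M_A\frac{dt}{t}$ via Proposition~\ref{1}, the identification \eqref{5}, and the computation \eqref{6}, followed by Lemma~\ref{7} to pass to the duals. Your explicit remark that isomorphisms of fractional ideals are given by units of $L$ and hence preserve endomorphism rings is exactly the (implicit) bridge the paper relies on.
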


\section{Semigroups}\label{14}

Let $\nu_i\colon L_i\to\ZZ\cup\{\infty\}$ be the discrete valuation with respect to the parameter $t_i$ and define the \emph{multivaluation} on $L$ to be 
\[
\nu:=(\nu_1,\dots,\nu_r):L\to(\ZZ\cup\{\infty\})^r.
\]
Let $D(L):=\{x\in L\mid \forall i=1,\dots,r: x_i\ne0\}$ denote the set of non-zero divisors in $L$ and set $D(M):=D(L)\cap M$ for any subset $M\subseteq L$.
Note that $D(A):=A\setminus\bigcup_{i=1}^r\pp_i$.

\begin{dfn} 
For any subset $M\subseteq L$, we set $\Gamma(M):=\nu(M\cap D(L))$.
Then the \emph{semigroup of $A$} is defined as
\[
\Gamma_A:=\Gamma(A)\subset\NN^r.
\]
\end{dfn}

Note that, for any fractional ideal $I$, $\Gamma(I)$ is a \emph{$\Gamma_A$-set}, that is, 
\[
\alpha\in\Gamma_A,\beta\in\Gamma(I)\Rightarrow \alpha+\beta\in\Gamma(I). 
\]

Although, in general, $\mm_A$ and $t\dt\mm_A$ are incomparable and $\mm_A\not\cong M_A$ (see appendix), we have at least $t\dt\mm_A\subseteq M_A$ and
\begin{equation}\label{19}
\Gamma(\mm_A)=\Gamma(t\dt\mm_A)\subseteq\Gamma(M_A).
\end{equation}
The purpose of this section is to prove the following result.

\begin{prp}\label{16}
If $\Gamma(\mm_A)=\Gamma(M_A)$ then $t\dt\mm_A=M_A$.
\end{prp}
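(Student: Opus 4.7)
The plan is to reduce the equality of modules $t\dt\mm_A = M_A$ to an equality of $k$-dimensions modulo the conductor $C_A$.

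First, I would verify that $C_A \subseteq t\dt\mm_A$: in each branch, $t_i\dt[i]$ acts on $t_i^n \in k[[t_i]]$ by multiplication by $n$, and since $\ch k = 0$, it restricts to a $k$-linear bijection on $t_i^{\delta_i}k[[t_i]]$. Hence $t\dt C_A = C_A$, and combined with $C_A \subseteq \mm_A$ this yields $C_A \subseteq t\dt\mm_A \subseteq M_A$. Passing to the finite-dimensional $k$-vector space $\tilde A/C_A$, we obtain an inclusion of $k$-subspaces $\overline{t\dt\mm_A} \subseteq \overline{M_A}$, so the sought equality is equivalent to the equality of their $k$-dimensions. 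Since $\mm_A$ has no constant terms in any branch, $t\dt\colon \mm_A \to t\dt\mm_A$ is a $k$-linear bijection (again using $\ch k = 0$) that descends to a $k$-isomorphism $\mm_A/C_A \xrightarrow{\sim} \overline{t\dt\mm_A}$. The problem therefore reduces to showing $\dim_k \mm_A/C_A = \dim_k M_A/C_A$.

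To derive this dimension equality from $\Gamma(\mm_A) = \Gamma(M_A)$, I would prove that the dimension $\dim_k I/C_A$ of a fractional ideal $C_A \subseteq I \subseteq \tilde A$ depends only on $\Gamma(I)$. In the one-branch setting this is classical, with $\dim_k I/C_A = \#(\Gamma(I) \cap [0,\delta))$, and the $\Gamma$-equality immediately yields the desired dimension equality. In the multi-branch setting, however, $\Gamma(I)$ alone is insufficient: one must also track, at each $\alpha \in \Gamma(I)$, the \emph{leading coefficient space} $V_\alpha(I) \subseteq k^r$ of leading-term tuples of elements of $I$ with multivaluation $\geq \alpha$, and the dimension $\dim_k I/C_A$ is the sum of $\dim_k V_\alpha(I)$ over $\alpha \in [0,\delta)$. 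The main obstacle is therefore to upgrade the given $\Gamma$-equality to a matching $V_\alpha(\mm_A) = V_\alpha(M_A)$ for every $\alpha$. I would attempt this by induction on the componentwise order on $\NN^r$, exploiting the presentation $M_A = A \cdot t\dt\mm_A$ together with the product rule $a \cdot t\dt g = t\dt(ag) - g \cdot t\dt a$ to pull leading-coefficient data of $M_A$ back to that of $t\dt\mm_A$ (which is in turn a branch-wise rescaling of that of $\mm_A$); the $\Gamma$-hypothesis is what ensures that, at each step, a matching representative in $t\dt\mm_A$ can be found to cancel the leading multi-monomial of a given element of $M_A$.
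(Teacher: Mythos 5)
The first half of your argument is sound and is essentially the same reduction the paper makes: you correctly establish $C_A=t\dt C_A\subseteq t\dt\mm_A\subseteq M_A$ (using $\ch k=0$ and $\delta\in\NN_+^r$) and reduce the statement to comparing the finite-dimensional spaces $t\dt\mm_A/C_A\subseteq M_A/C_A$. The detour through the bijection $\mm_A\cong t\dt\mm_A$ is harmless but buys nothing: since $t\dt\mm_A\subseteq M_A$, equality of their dimensions modulo $C_A$ is literally the same statement as equality of the subspaces, so nothing has been gained yet.

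The second half has a genuine gap, sitting exactly where you flag ``the main obstacle''. Two problems. First, the counting formula $\dim_k I/C_A=\sum_{\alpha\in[0,\delta)}\dim_k V_\alpha(I)$ is false for $r\ge2$ with $V_\alpha$ as you define it: for $I=\tilde A$ and $\delta=(2,2)$ one has $V_\alpha(\tilde A)=k^2$ for all four $\alpha\in\{0,1\}^2$, giving $8$, whereas $\dim_k\tilde A/C_A=4$. A correct length formula in terms of value-set data in the multi-branch case is considerably more delicate. Second, and more seriously, the step ``a matching representative in $t\dt\mm_A$ can be found to cancel the leading multi-monomial of a given element of $M_A$'' is precisely what the hypothesis does \emph{not} give you: $\Gamma(\mm_A)=\Gamma(M_A)$ produces, for $z\in M_A$ with $\nu(z)=\alpha$, some $x'\in t\dt\mm_A$ with $\nu(x')=\alpha$, but the leading-coefficient tuples of $z$ and $x'$ need not be proportional, so no scalar multiple of $x'$ cancels the leading term of $z$ in all branches at once. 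One can only cancel one chosen component per step, with the valuations in the other components merely not decreasing. The paper's Lemma~\ref{21} turns exactly this weaker cancellation into a terminating procedure: it fixes one component $i$ with $\nu_i<\delta_i$ at a time, inducts on $\sum_i\max\{\delta_i-\nu_i(x),0\}$, and uses $t^\delta\tilde A\subseteq t\dt\mm_A$ as the terminal case once every valuation reaches the conductor; this sidesteps all leading-coefficient bookkeeping. Your proposed induction on the componentwise order of $\alpha$ via the product rule is not carried out and, as sketched, does not close this gap.
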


By \eqref{20}, we have that
\[
C_A\subseteq t\dt\mm_A
\]
Thus, the proof of Proposition~\ref{16} follows from equation \eqref{19} and the following lemma.

\begin{lem}\label{21}
Let $M\subseteq N\subseteq L$ be two k-vector subspaces of $\tilde A$ and $\delta\in\NN_+^r$ such that $t^\delta\tilde A\subseteq M$. 
Then the equality $\Gamma(M)=\Gamma(N)$ implies $M=N$.
\end{lem}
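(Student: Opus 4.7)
The plan is to argue by contradiction: assume $M \subsetneq N$, pick any $x \in N \setminus M$, and successively modify $x$ by elements of $M$ until its multivaluation becomes componentwise at least $\delta$, so that $x \in t^\delta \tilde A \subseteq M$, contradicting $x \notin M$.

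First, I would arrange that $x$ is a non-zero divisor. If some component $x_j$ vanishes, I add to $x$ the monomial $t_j^{\delta_j} e_j$, where $e_j$ denotes the $j$-th primitive idempotent of $\tilde A = \prod_i \tilde A_i$; this element lies in $t^\delta \tilde A \subseteq M$, keeps $x$ in $N \setminus M$, and installs a nonzero $j$-th component with $\nu_j(x_j) = \delta_j$. After doing this for every vanishing component, $x \in N \cap D(L) \setminus M$, and $\alpha := \nu(x)$ is a finite vector in $\Gamma(N)$.

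For the iterative step, I would use $\Gamma(M) = \Gamma(N)$ to pick $y \in M \cap D(L)$ with $\nu(y) = \alpha$, choose an index $i$ with $\alpha_i < \delta_i$, and subtract $(c/d)\,y$ from $x$, where $c,d \in k^*$ are the leading coefficients of $x_i$ and $y_i$ at $t_i^{\alpha_i}$. Since $M$ is a $k$-subspace, the scalar multiple $(c/d)\,y$ lies in $M$, so $x' := x - (c/d)\,y$ remains in $N \setminus M$. By construction $\nu_i(x'_i) > \alpha_i$, while $\nu_j(x'_j) \geq \alpha_j$ for $j \neq i$, since leading terms in a component $j \neq i$ can only cancel if they happen to match. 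If the subtraction accidentally annihilates some component $x'_j$ entirely, the idempotent repair from the previous step restores the non-zero-divisor property at the cost of setting $\nu_j = \delta_j$. The integer-valued quantity $\Phi(x) := \sum_j \max(0,\, \delta_j - \nu_j(x_j))$ strictly decreases at each iteration and is bounded below by $0$, so after finitely many steps $\nu(x) \geq \delta$ componentwise, yielding the desired contradiction.

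The main obstacle is the book-keeping around the non-zero-divisor condition: the semigroup hypothesis $\Gamma(M) = \Gamma(N)$ can only be applied when the current element lies in $D(L)$, yet subtractions inside a plain $k$-subspace can easily create zero components. The role of the hypothesis $t^\delta \tilde A \subseteq M$ is precisely to furnish enough repair material inside $M$ to restore the non-zero-divisor condition at every step without perturbing either $x \in N$ or $x \notin M$.
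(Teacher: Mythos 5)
Your proposal is correct and follows essentially the same strategy as the paper's proof: repair zero components using elements of $t^\delta\tilde A\subseteq M$, then induct on the quantity $\sum_j\max(0,\delta_j-\nu_j(x_j))$ by subtracting a suitably scaled element of $M\cap D(L)$ with the same multivaluation. Your version merely makes explicit the scalar normalization of leading coefficients and the contradiction framing, both of which are implicit in the paper's direct induction.
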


\begin{proof}
Let $x=(x_1,\dots,x_r)$ be an element of $N$. 
If $\nu_i(x)\geq\delta_i$ for all $i$, then already $x\in t^\delta\tilde A\subseteq M$. 
 
We can eliminate the zero components of $x$ by choosing $y\in t^\delta\tilde A\subseteq M$ such that $x+y\in D(N)$ and hence $\nu_i(x+y)\geq\min(\nu_i(x),\delta_i)$ for all $i$. 
If for some $i$ we have $\nu_i(x)<\delta_i$, there is by hypothesis an element $x'\in D(M)$ such that $\nu(x')=\nu(x+y)$ and $\nu_i(x+y-x')>\nu_i(x+y)$ as well as $\nu_j(x+y-x')\geq\nu_j(x+y)$ for all $j\neq i$. 
We can then conclude by an induction on $\sum_{i=0}^r\max\{\delta_i-\nu_i(x),0\}$ that  $x+y-x'\in M$ and hence $x\in M$.
\end{proof}

\begin{rmk}
By Lemma~\ref{21}, $\delta'+\NN^r\subseteq\Gamma_A$ implies that $\delta\le\delta'$ for $\delta$ as in \eqref{20}.
\end{rmk}

\section{Gorenstein symmetry}\label{15}

The purpose of this section is to prove the following result.

\begin{prp}\label{17}
If $\End_A(\mm_A)=\End_A(M_A)$ then $\Gamma(\mm_A)=\Gamma(M_A)$.
\end{prp}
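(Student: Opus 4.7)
My plan combines local duality on the Gorenstein ring $A$ with Gorenstein symmetry of value semigroups. For any fractional ideal $I$, local duality applied to the short exact sequence $0\to I\cdot I^{-1}\to A\to A/(I\cdot I^{-1})\to 0$, together with $\omega_A\cong A$ and the identity $(I\cdot I^{-1})^{-1}=\End_A(I)$ (from reflexivity \eqref{5} and Hom--tensor adjunction), yields the length formula
\[
\ell_A(\End_A(I)/A)=\ell_A(A/(I\cdot I^{-1})).
\]
Applied to $I=\mm_A$, which is non-principal, this gives $\mm_A\cdot\mm_A^{-1}=\mm_A$ and $\ell_A(\End_A(\mm_A)/A)=1$; applied to $I=M_A$, the hypothesis $\End_A(M_A)=\End_A(\mm_A)$ then forces $M_A\cdot M_A^{-1}=\mm_A$.

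From $M_A\cdot M_A^{-1}=\mm_A$, every nonzerodivisor $x\in M_A^{-1}$ satisfies $xM_A\subseteq\mm_A$, whence $\nu(x)+\Gamma(M_A)\subseteq\Gamma(\mm_A)$. In view of \eqref{19}, the desired equality $\Gamma(\mm_A)=\Gamma(M_A)$ follows as soon as one exhibits such an $x$ with $\nu(x)=0\in\ZZ^r$, equivalently $0\in\Gamma(M_A^{-1})$, i.e.\ a unit of $\tilde A$ inside $M_A^{-1}$.

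To produce this unit I would invoke the Gorenstein symmetry of value semigroups of fractional ideals over $A$. In the analytically irreducible case, J\"ager's symmetry $\alpha\in\Gamma(I^{-1})\iff\gamma-1-\alpha\notin\Gamma(I)$ (with $\gamma$ the conductor of $A$), applied to $I=M_A$, reduces the problem to showing $\gamma-1\notin\Gamma(M_A)$; in the multi-branch setting the analogous involution (as developed by D'Anna and Pol) involves maximal points of $\Gamma_A$-sets. The main obstacle is this last step: ruling out $\gamma-1\in\Gamma(M_A)$ requires the full strength of the hypothesis $\End_A(M_A)=\mm_A^{-1}$, beyond the derived product identity, via the inclusion $\End_A(M_A)\subseteq\mm_A^{-1}$ complementing Lemma~\ref{8}, in order to exclude ``extra'' stabilizers of $M_A$ that would otherwise be compatible with a value semigroup strictly larger than $\Gamma(\mm_A)$.
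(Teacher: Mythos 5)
The formal half of your argument is correct: for the one-dimensional Gorenstein ring $A$ one indeed has $(II^{-1})^{-1}=\End_A(I)$ and $\ell(\End_A(I)/A)=\ell(A/II^{-1})$, so the hypothesis together with $\ell(\End_A(\mm_A)/A)=\ell(\mm_A^{-1}/A)=1$ (Lemma~\ref{2} and the Gorenstein property) forces $M_AM_A^{-1}=\mm_A$, and the desired inclusion $\Gamma(M_A)\subseteq\Gamma(\mm_A)$ would follow from the existence of a single $x\in M_A^{-1}$ with $\nu(x)=0$. Via the Gorenstein duality of value sets (Delgado's symmetry \eqref{22} and its J\"ager-type consequence for fractional ideals), that existence is equivalent to $\Delta(\tau)\cap\Gamma(M_A)=\emptyset$. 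But this is exactly where you stop: you name the exclusion of $\Delta(\tau)\cap\Gamma(M_A)\neq\emptyset$ as ``the main obstacle'' and only assert that it ``requires the full strength of the hypothesis'' without giving an argument. That step is not a deferrable technicality --- everything preceding it is formal duality, and the entire content of the proposition is concentrated there --- so the proof is incomplete. (Your reduction does, in effect, reprove Lemma~\ref{24}.\eqref{24a}; what is missing is the analogue of Lemma~\ref{24}.\eqref{24b} and Lemma~\ref{25}.)

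Concretely, the missing mechanism is the one by which a valuation in $\Delta(\tau)\cap\Gamma(M_A)$ manufactures an endomorphism of $M_A$ that does not stabilize $\mm_A$. In the paper this goes as follows: since $C_A=t^\delta\tilde A\subseteq M_A$, a single element of $\Delta_i(\tau)\cap\Gamma(M_A)$ forces the whole set $\Delta_i(\tau)$ into $\Gamma(M_A)$ by truncating modulo $t^\delta\tilde A$; one then computes $\Gamma(\End_A(\mm_A))\setminus\Gamma_A=\Delta(\tau)$ exactly, and exhibits a monomial $t^\alpha\in\End_A(M_A)$ with $\alpha\in je_i+\Delta_i(\tau)$ for some $j<0$, hence $\alpha\notin\Gamma_A\cup\Delta(\tau)$; by Lemma~\ref{21} this contradicts $\End_A(\mm_A)=\End_A(M_A)$. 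Your parenthetical appeal to a D'Anna--Pol-type involution in the multibranch case is likewise only a pointer: the precise multivariable statement needed is again Delgado's symmetry combined with this $\Delta$-analysis. In short, your duality/length-formula framing is a clean repackaging of the easy half of the proof, but the hard half is absent.
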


The proof of Proposition~\ref{17} uses that $X$ being Gorenstein is equivalent to a symmetry property of $\Gamma_A$ which is due to Kunz~\cite{Kun70} in the irreducible case and to Delgado~\cite{Del88} in general. 
The formulation of the precise statement requires some notation.
Recall that $\Gamma(C_A)=\delta+\NN^r$ by \eqref{20} and we set $\tau=\delta-(1,\dots,1)$. 
For any $\alpha\in\ZZ^r$, we denote 
\[
\Delta(\alpha):=\bigcup_{i=1}^r\Delta_i(\alpha),\quad \Delta_i(\alpha):=\{\beta\in\ZZ^r\mid \alpha_i=\beta_i \text{ and } \alpha_j<\beta_j \text{ if } j\neq i\}.
\]

\begin{dfn}
The semigroup $\Gamma_A$ is called \emph{symmetric} if 
\begin{equation}\label{22}
\forall\alpha\in\ZZ^r:\alpha\in\Gamma_A\Leftrightarrow\Delta(\tau-\alpha)\cap\Gamma_A=\emptyset.
\end{equation}
\end{dfn}

\begin{thm}(Delgado)
The curve $X$ is Gorenstein if and only if its semigroup $\Gamma_A$ is symmetric.
\end{thm}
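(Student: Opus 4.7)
The plan is to reduce the statement to a duality between fractional ideals and their value sets, parallel to the involution $-^{-1}$ on fractional ideals that is available precisely in the Gorenstein case \eqref{5}. Normalize so that $C_A=t^\delta\tilde A$ and set $\tau=\delta-(1,\dots,1)$. For any fractional ideal $I$ with $C_A\subseteq I\subseteq\tilde A$ (we may assume this after translating by a suitable $t^\alpha$), define the combinatorial ``dual''
\[
\Gamma(I)^\star:=\{\alpha\in\ZZ^r\mid\Delta(\tau-\alpha)\cap\Gamma(I)=\emptyset\}.
\]
The goal is to prove that $\Gamma(I^{-1})\subseteq\Gamma(I)^\star$ always, with equality for all $I$ exactly when $A$ is Gorenstein; applying this to $I=A$ (using $A\cong A^{-1}$ in the Gorenstein case) will give \eqref{22}.

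The heart of the argument is the inclusion $\Gamma(I^{-1})\subseteq\Gamma(I)^\star$, which I would prove by contradiction. Suppose $x\in I^{-1}\cap D(L)$ has $\nu(x)=\alpha$ but some $y\in I\cap D(L)$ has $\nu(y)=\beta\in\Delta_i(\tau-\alpha)$. Then $xy\in A$ satisfies $\nu_i(xy)=\tau_i$ and $\nu_j(xy)>\tau_j$ for $j\ne i$. Using that $\Gamma(A)$ is a $\Gamma_A$-set and that $\tau+(1,\dots,1)=\delta$ generates the conductor, one shows that no element of $A$ can have such a valuation profile (essentially because $\tau\in\Delta(\mathbf0)$-translates would force $xy$ into $C_A$, contradicting $\nu_i(xy)=\tau_i<\delta_i$). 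This is the place where the interaction between components through $\Delta$ plays a crucial role and generalizes Kunz's scalar statement $\tau-\alpha\notin\Gamma_A$ from the irreducible case.

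Next, I would establish the length identity
\[
\ell(I^{-1}/C_A)=\#\bigl(\Gamma(I)^\star\cap(\NN^r\setminus\Gamma(C_A))\bigr)+\ell(\tilde A/C_A)-\#(\NN^r\setminus\Gamma(C_A))
\]
by a standard filtration argument (filter $I^{-1}$ by level sets $\{x\mid\nu(x)\ge\gamma\}$ and count one-dimensional graded pieces indexed by elements of $\Gamma(I^{-1})\cap(\NN^r\setminus\Gamma(C_A))$; the analogous formula for $I$ is classical). Combined with the inclusion of the previous paragraph, equality $\Gamma(I^{-1})=\Gamma(I)^\star$ is equivalent to the ring-theoretic identity $\ell(I^{-1}/C_A)+\ell(I/C_A)=2\ell(\tilde A/C_A)$, which for $I=A$ is the Gorenstein length symmetry $\ell(\tilde A/A)=\ell(A/C_A)$. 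This provides both directions: Gorenstein forces $\Gamma(A^{-1})=\Gamma(A)^\star$, and symmetry of $\Gamma_A$ forces the length identity, hence $A=A^{-1}$ and Gorenstein.

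The main obstacle is the multi-branch combinatorics in the first inclusion: in the irreducible case the argument collapses to $\alpha+\beta\le\tau\Rightarrow\alpha+\beta\notin\delta+\NN$, but for $r\ge 2$ one must carefully handle all the ``mixed'' constraints encoded by $\Delta_i(\tau-\alpha)$, and verify that the $\Gamma_A$-set structure of $\Gamma(A)$ together with the precise shape of $\Gamma(C_A)=\delta+\NN^r$ rules out the offending valuation profile. The length-counting step in the second paragraph is then essentially bookkeeping, provided one is careful that the filtration pieces outside $\Gamma(C_A)$ have the expected dimensions.
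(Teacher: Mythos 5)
First, a point of comparison: the paper does not prove this statement at all --- it is quoted as an external result, due to Kunz in the irreducible case and to Delgado for $r>1$, and is only used as an input to Proposition~\ref{17}. So there is no in-paper proof to match your argument against; what follows assesses your attempt on its own. Your overall strategy --- dualize fractional ideals, describe the value set of the dual combinatorially via $\Gamma(I)^\star$, and convert the comparison into a length identity --- is essentially the D'Anna--Delgado route, and your first inclusion $\Gamma(I^{-1})\subseteq\Gamma(I)^\star$ is sound once you actually justify $\Delta(\tau)\cap\Gamma_A=\emptyset$ without the Gorenstein hypothesis (this is true: an $x\in A$ with $\nu(x)\in\Delta_i(\tau)$ lets you reduce any $w$ with $\nu_i(w)\ge\delta_i-1$, $\nu_j(w)\ge\delta_j$ into $C_A$ by subtracting a scalar multiple of $x$, so $t^{\delta-e_i}\tilde A\subseteq A$, contradicting maximality of the conductor).

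However, there are two genuine gaps. (1) The length formula is false for $r\ge2$. The sets $\NN^r\setminus\Gamma(C_A)$ and $\Gamma(I)^\star\cap(\NN^r\setminus\Gamma(C_A))$ that you propose to count are infinite as soon as $r\ge2$, and the filtration by level sets $\{x\mid\nu(x)\ge\gamma\}$ does not produce one one-dimensional jump per element of the value set: which lattice points contribute a jump depends on the chain chosen. This is exactly the multi-branch subtlety that separates Delgado's theorem from Kunz's; repairing it requires D'Anna's distance function $d(E\setminus F)$ over saturated chains together with the nontrivial facts that $\ell(I/J)=d(\Gamma(I)\setminus\Gamma(J))$ and that $d(\Gamma(N)\setminus\Gamma(M))=0$ forces $M=N$ (the analogue of Lemma~\ref{21}). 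Your second paragraph treats this as ``bookkeeping,'' but it is the core difficulty. (2) The closing deduction ``hence $A=A^{-1}$ and Gorenstein'' is vacuous: $A^{-1}=\Hom_A(A,A)=A$ for every ring, so nothing about the Gorenstein property can be extracted from it. The dual you need is the canonical one, $\Hom_A(-,\omega_A^1)$, realized as a fractional ideal $\omega$ with $A\subseteq\omega\subseteq\tilde A$; the correct chain is $\Gamma(\omega)=\Gamma_A^\star$, and then Gorenstein $\Leftrightarrow\omega=A\Leftrightarrow\Gamma(\omega)=\Gamma_A\Leftrightarrow\Gamma_A=\Gamma_A^\star$. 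Using $\Hom_A(-,A)$ throughout implicitly presupposes $\omega\cong A$, i.e.\ the very property being characterized, so the ``symmetric $\Rightarrow$ Gorenstein'' direction does not close as written.
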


\begin{rmk}
In the irreducible case $r=1$ the symmetry condition of Delgado reduces to the classical Kunz symmetry condition
\[
\forall\alpha\in\{0,\dots,\tau\}:\alpha\in\Gamma_A\Leftrightarrow\tau-\alpha\notin\Gamma_A.
\]
\end{rmk}

We prove Proposition~\ref{17} in a sequence of lemmas.

\begin{lem}\label{36}\
\begin{asparaenum}
\item\label{36a} $\Gamma_A\subseteq\{0\}\cup\left((1,\dots,1)+\NN^r\right)$.
\item\label{36b} $\tau\in\Gamma_A$ if and only if $r>1$.
\item\label{36c} $\Delta(\tau)\cap\Gamma_A=\emptyset$.
\end{asparaenum}
\end{lem}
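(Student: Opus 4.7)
The plan is to prove (a) directly from the embedding $A\subseteq\prod_iA_i\subseteq\prod_i\tilde A_i=\prod_ik[[t_i]]$ of \eqref{11}, and then to derive (b) and (c) as formal consequences of (a) together with the Gorenstein symmetry \eqref{22} of $\Gamma_A$ furnished by Delgado's theorem.

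For (a), I would take $a\in D(A)=A\cap D(L)$, so the image $a_i$ of $a$ in $A_i=A/\pp_i$ is nonzero for every $i$. Since $A$ is local with residue field $k=A/\mm_A\cong A_i/\mm_{A_i}$, either $a\notin\mm_A$ and $a$ is a unit in $A$ with $\nu(a)=0$, or $a\in\mm_A$, in which case each $a_i$ lies in $\mm_{A_i}\subseteq t_i\tilde A_i$, giving $\nu_i(a)\ge 1$ for every $i$ and hence $\nu(a)\in(1,\dots,1)+\NN^r$.

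For (c), the unit $1\in D(A)$ satisfies $\nu(1)=0$, so $0\in\Gamma_A$; applying \eqref{22} with $\alpha=0$ immediately yields $\Delta(\tau)\cap\Gamma_A=\emptyset$. For (b), I would apply \eqref{22} with $\alpha=\tau$ to obtain the equivalence $\tau\in\Gamma_A\Leftrightarrow\Delta(0)\cap\Gamma_A=\emptyset$, and then use (a) to analyse when this disjointness holds. Any nonzero $\alpha\in\Gamma_A$ has $\alpha_i\ge 1$ for all $i$ by (a), hence cannot lie in any $\Delta_i(0)$, which demands $\alpha_i=0$. So the only candidate for $\Delta(0)\cap\Gamma_A$ is $0$ itself, and by inspection $0\in\Delta_i(0)$ precisely when the auxiliary condition ``$\beta_j>0$ for $j\ne i$'' is vacuous, i.e.\ exactly when $r=1$. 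Thus $\Delta(0)\cap\Gamma_A=\emptyset$ if and only if $r>1$, which is (b).

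I do not anticipate a genuine obstacle: part (a) is immediate once one unwinds the definitions, and parts (b) and (c) are short formal deductions from (a) and the symmetry \eqref{22}. The only point that requires a little care is the vacuous-quantifier behaviour of $\Delta_i(0)$ in the irreducible case $r=1$, which is precisely what forces $\tau\notin\Gamma_A$ there, in agreement with the classical Kunz symmetry recalled in the preceding remark.
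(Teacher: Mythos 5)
Your proof is correct and follows essentially the same route as the paper: part (a) via the locality of $A\to A_i$ forcing $\nu_i\ge1$ on $\mm_A$, and parts (b) and (c) by specializing the symmetry condition \eqref{22} to $\alpha=\tau$ and $\alpha=0$ respectively. Your explicit discussion of the vacuous condition defining $\Delta_i(0)$ when $r=1$ merely spells out what the paper's terse ``by (a), $\Delta(0)\cap\Gamma_A=\emptyset$ if and only if $r>1$'' leaves implicit.
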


\begin{proof}\
\begin{asparaenum}

\item Let $\alpha\in\Gamma_A$ be such that $\alpha_i=0$ for some $i$.
Then $\alpha=\nu(x)$ for some $x\in A$ with $x_i\not\in\mm_{A_i}$. 
This implies that $x\not\in\mm_A$ and hence $\alpha=0$.

\item By \eqref{36a}, $\Delta(0)\cap\Gamma_A=\emptyset$ if and only if $r>1$ and the claim follows from \eqref{22}.

\item This follows from $0\in\Gamma_A$ and \eqref{22}.

\end{asparaenum}
\end{proof}

\begin{rmk}
For any $\beta\in\NN^r$, $\tau+\beta\notin\Gamma_A$ if and only if $\beta$ has exactly one zero component, generalizing Lemma~\ref{36}.\eqref{36c}.
\end{rmk}

\begin{lem}\label{24}\
\begin{asparaenum}
\item\label{24a} If $\Gamma(\mm_A)\subsetneq\Gamma(M_A)$ then $\Delta(\tau)\cap\Gamma(M_A)\ne\emptyset$.
\item\label{24b} If $\Delta(\tau)\cap\Gamma(M_A)\ne\emptyset$ then $\Delta_i(\tau)\subseteq\Gamma(M_A)$ for some $i$.
\end{asparaenum}
\end{lem}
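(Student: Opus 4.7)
I plan to handle \eqref{24a} via Delgado's symmetry after observing that any ``excess'' element of $\Gamma(M_A)$ beyond $\Gamma(\mm_A)$ must lie outside $\Gamma_A$, and to handle \eqref{24b} by perturbing a given representative by an element of the conductor.

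For \eqref{24a}, the key is that $M_A\subseteq t\tilde A$. Indeed, every $f\in\mm_A$ lies in $\prod_j t_j k[[t_j]]$ since $A$ and each branch $A_j$ share residue field $k$, and the tuple $t\dt$ of Euler-type derivations preserves $t\tilde A$; hence so does $M_A=A\cdot t\dt\mm_A$. Consequently $\Gamma(M_A)\subseteq (1,\dots,1)+\NN^r$, and combined with $\Gamma(\mm_A)=\Gamma_A\setminus\{0\}$ (a direct consequence of Lemma~\ref{36}.\eqref{36a}), any $\alpha\in\Gamma(M_A)\setminus\Gamma(\mm_A)$ must satisfy $\alpha\notin\Gamma_A$. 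Delgado's symmetry \eqref{22} then furnishes some $\beta\in\Delta_i(\tau-\alpha)\cap\Gamma_A$ for some index $i$, and since $\Gamma(M_A)$ is a $\Gamma_A$-set, $\alpha+\beta\in\Gamma(M_A)$; a componentwise check yields $\alpha+\beta\in\Delta_i(\tau)\subseteq\Delta(\tau)$.

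For \eqref{24b}, fix $\alpha\in\Delta_i(\tau)\cap\Gamma(M_A)$ and $y\in M_A$ with $\nu(y)=\alpha$. Given an arbitrary target $\beta\in\Delta_i(\tau)$, the plan is to build $c\in C_A$ with $c_i=0$ such that $\nu(y+c)=\beta$. The choice $c_i=0$ automatically gives $\nu_i(y+c)=\nu_i(y)=\tau_i=\beta_i$. For each $j\neq i$, both $\alpha_j$ and $\beta_j$ exceed $\tau_j=\delta_j-1$, so they are at least $\delta_j$; a trichotomy on whether $\beta_j$ is greater than, equal to, or less than $\alpha_j$ allows me to pick $c_j\in t_j^{\delta_j}k[[t_j]]$ with $\nu_j(y_j+c_j)=\beta_j$ in every case. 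Then $c\in t^\delta\tilde A=C_A$, and since $C_A\subseteq M_A$ (as used in the proof of Proposition~\ref{16}), $y+c\in M_A$ witnesses $\beta\in\Gamma(M_A)$.

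The most delicate point is the case analysis in \eqref{24b}, which uses the condition $\beta\in\Delta_i(\tau)$ to ensure $\beta_j\geq\delta_j$ for $j\neq i$, keeping $c$ inside the conductor. The containment $M_A\subseteq t\tilde A$ is elementary but indispensable in \eqref{24a}: without it one could not exclude $0\in\Gamma(M_A)\setminus\Gamma(\mm_A)$, a configuration in which $\Delta(\tau)\cap\Gamma(M_A)$ could still be empty in view of Lemma~\ref{36}.\eqref{36c}.
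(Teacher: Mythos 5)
Your proposal is correct and follows essentially the same route as the paper: part \eqref{24a} via Delgado's symmetry \eqref{22} together with the $\Gamma_A$-set property of $\Gamma(M_A)$, and part \eqref{24b} by perturbing a representative of valuation in $\Delta_i(\tau)$ by an element of the conductor $C_A=t^\delta\tilde A\subseteq M_A$. Your explicit verification that $M_A\subseteq t\tilde A$, which rules out $\alpha=0$ and hence guarantees $\alpha\notin\Gamma_A$ before invoking \eqref{22}, is a point the paper leaves implicit.
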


\begin{proof}\
\begin{asparaenum}

\item Let $\alpha\in\Gamma(M_A)\setminus\Gamma(\mm_A)$.
Then, by \eqref{22} and \eqref{19}, there is a $\beta\in\Delta_i(\tau-\alpha)\cap\Gamma_A\subseteq\Gamma(\mm_A)\subseteq\Gamma(M_A)$.
Thus, $\alpha+\beta\in\Delta_i(\tau)\cap\Gamma(M_A)\subseteq\Delta(\tau)\cap\Gamma(M_A)$.

\item We may assume that there is an element $x\in M_A$ with $\nu(x)\in\Delta_1(\tau)\cap\Gamma(M_A)$.
Up to a factor in $k^*$, $x\equiv(t^{\tau_1},\dots)\mod t^\delta\tilde A$.
For any $\beta\in\Delta_1(\tau)$, $x-t^\beta\in t^\delta\tilde A\subseteq M_A$ and hence $t^\beta\in M_A$ and $\beta\in\Gamma(M_A)$.

\end{asparaenum}
\end{proof}

\begin{lem}\label{25}\
\begin{asparaenum}
\item\label{25a} $\Gamma(\End_A(\mm_A))\setminus\Gamma_A=\Delta(\tau)$.
\item\label{25b} If $\Delta_i(\tau)\subseteq\Gamma(M_A)$ for some $i$ then $(\Gamma(\End_A(M_A))\setminus\Gamma_A)\cap\bigcup_{j<0}je_i+\Delta_i(\tau)\neq\emptyset$.
\end{asparaenum}
\end{lem}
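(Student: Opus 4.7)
My plan addresses the two parts separately, both leveraging Delgado's symmetry \eqref{22} and Lemma~\ref{36}.

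For part (a), Lemma~\ref{2} identifies $\End_A(\mm_A)=\mm_A^{-1}$, so I compute $\Gamma(\mm_A^{-1})\setminus\Gamma_A$ by proving both inclusions. For $\subseteq\Delta(\tau)$: given $\alpha$ on the left and $y\in\mm_A^{-1}$ realising it, the symmetry yields $\beta\in\Delta_j(\tau-\alpha)\cap\Gamma_A$. The case $\beta=0$ unpacks directly to $\alpha\in\Delta_j(\tau)$. Otherwise $\beta\ne 0$ lies in $\Gamma(\mm_A)$ by Lemma~\ref{36}.\eqref{36a}; lifting $\beta$ to $x\in\mm_A$ gives $yx\in A$ and hence $\alpha+\beta=\nu(yx)\in\Gamma_A$, while $\beta\in\Delta_j(\tau-\alpha)$ rearranges to $\alpha+\beta\in\Delta_j(\tau)$, contradicting Lemma~\ref{36}.\eqref{36c}. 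For $\supseteq$: since $\Delta(\tau)\cap\Gamma_A=\emptyset$ by Lemma~\ref{36}.\eqref{36c}, I only need $\Delta(\tau)\subseteq\Gamma(\mm_A^{-1})$. For $\alpha\in\Delta_i(\tau)$, take $y:=t^\alpha\in\tilde A$; the key observation is that $A\into\tilde A$ is a finite local morphism into the semilocal $\tilde A=\prod_j\tilde A_j$ with Jacobson radical $\mm_{\tilde A}$, forcing $\mm_A\subseteq\mm_{\tilde A}$. Hence every $x\in\mm_A$ has $\nu_j(x_j)\ge 1$, so combined with $\alpha_j\ge\delta_j$ for $j\ne i$ and $\alpha_i+1=\delta_i$, each $(yx)_j$ has valuation $\ge\delta_j$, giving $yx\in C_A\subseteq A$.

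For part (b), the same analysis gives $M_A\subseteq\mm_{\tilde A}$ (each generator $t\dt z$ with $z\in\mm_A\subseteq\mm_{\tilde A}$ has $j$-th component $t_j\dt[j]z_j\in\mm_{\tilde A_j}$), so $\Gamma(M_A)\subseteq\{\gamma:\gamma_j\ge 1\}$ and the hypothesis forces $\tau_i\ge 1$. My target is $y\in\End_A(M_A)$ with $\nu(y)=(\delta_1,\dots,\delta_{i-1},0,\delta_{i+1},\dots,\delta_r)$: this lies in $-\tau_ie_i+\Delta_i(\tau)$ with $-\tau_i<0$, and fails the Lemma~\ref{36}.\eqref{36a} conditions (neither $\nu(y)=0$ nor $\nu(y)\ge(1,\dots,1)$ holds), so $\nu(y)\notin\Gamma_A$. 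To construct $y$, use the hypothesis to pick $x\in M_A$ realising the ``corner'' valuation $(\delta_1,\dots,\delta_{i-1},\tau_i,\delta_{i+1},\dots,\delta_r)\in\Delta_i(\tau)$, so that $x_l\in(C_A)_l$ for $l\ne i$ and $x_i=u_it_i^{\tau_i}$ with $u_i\in\tilde A_i^\times$; then set $y_l:=x_l$ for $l\ne i$ and $y_i:=u_i$, which has the prescribed valuation by construction.

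Verifying $yM_A\subseteq M_A$ is the main obstacle. For $m\in M_A$ the components $(ym)_l=x_lm_l$ already sit in $(C_A)_l$ for $l\ne i$, so $ym\bmod C_A$ is supported in the $i$-th slot and equal to $u_im_i\bmod t_i^{\delta_i}$. The verification reduces to showing that every slice $(0,\dots,0,u_im_i\bmod t_i^{\delta_i},0,\dots,0)$ lies in $M_A/C_A$, equivalently that the $i$-th slot projection of $M_A\cap\prod_{l\ne i}(C_A)_l$ recovers the full $i$-th projection of $M_A$ modulo $(C_A)_i$. I expect this closure property to follow by combining the hypothesis (which populates $M_A$ with elements concentrated in the $i$-th slot modulo $C_A$) with the $A$-module structure of $M_A$ and the explicit generating description $M_A=At\dt\mm_A$; this is the main technical point I foresee.
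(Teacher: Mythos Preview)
Your part~\eqref{25a} is correct and follows essentially the same route as the paper: both directions use Delgado symmetry~\eqref{22} together with Lemma~\ref{36}, and your explicit verification that $t^\alpha\mm_A\subseteq C_A$ for $\alpha\in\Delta_i(\tau)$ is a clean unpacking of the paper's terser argument via Lemma~\ref{21}.

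Part~\eqref{25b}, however, has a genuine gap. You commit at the outset to the extremal valuation $\nu(y)=(\delta_1,\dots,\delta_{i-1},0,\delta_{i+1},\dots,\delta_r)$, i.e.\ to the level $j=-\tau_i$, and then acknowledge that the verification $yM_A\subseteq M_A$ is ``the main technical point I foresee'' without actually carrying it out. The difficulty is real and your specific $y$ will in general \emph{not} lie in $\End_A(M_A)$. For $m\in D(M_A)$ with $\nu_i(m)=v$, the product $ym$ has $i$-th valuation $v$ and the other valuations $\ge\delta_l$; to land in $M_A$ (even at the level of $\Gamma$) you would need $je_i+\Delta_i(\tau)\subseteq\Gamma(M_A)$ for \emph{every} $j$ with $1-\tau_i\le j\le 0$, whereas the hypothesis only supplies $j=0$. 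Put differently, your ``closure property'' asks that the $i$-th projection of $M_A\cap\prod_{l\ne i}(C_A)_l$ modulo $(C_A)_i$ be stable under multiplication by the unit $u_i$; this is an $\tilde A_i$-linearity statement about an $A$-module and has no reason to hold. (In the branch case $r=1$ your $y$ is a unit of $\tilde A$ and $\nu(y)=0\in\Gamma_A$, so the construction fails outright.)

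The paper avoids this by not jumping to the bottom layer. It takes $m\le0$ \emph{minimal} with $je_i+\Delta_i(\tau)\subseteq\Gamma(M_A)$ for all $m<j\le1$; the hypothesis forces $m\le-1$, and the bound $m\ge-\tau_i$ comes from $\Gamma(M_A)\subseteq(1,\dots,1)+\NN^r$. For any $\alpha\in me_i+\Delta_i(\tau)$ one then has $\alpha+\Gamma(M_A)\subseteq\delta+me_i+\NN^r\subseteq\Gamma(M_A)$ by construction, and Lemma~\ref{21} upgrades this semigroup containment to $t^\alpha M_A\subseteq M_A$. Minimality of $m$ provides an $\alpha$ at that level with $\alpha\notin\Gamma(M_A)\supseteq\Gamma(\mm_A)$, hence $\alpha\notin\Gamma_A$. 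The missing idea in your attempt is precisely this minimality device: rather than fixing the target level in advance, let $\Gamma(M_A)$ dictate how far down you can go while keeping all intermediate layers inside it.
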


\begin{proof}\
\begin{asparaenum}

\item By Lemma~\ref{36}.\eqref{36a}, $\Delta(\tau)+\Gamma(\mm_A)\subseteq\delta+\ZZ^r\subseteq\Gamma(\mm_A)$ and hence $\supseteq$ by Lemma~\ref{21} and \ref{36}.\eqref{36c}.
To prove $\subseteq$, let $\alpha\in\Gamma(\End_A(\mm_A))\setminus\Gamma_A$.
Then, by \eqref{22}, there is a $\beta\in\Delta(\tau-\alpha)\cap\Gamma_A$ and hence $\alpha+\beta\in\Delta(\tau)$.
As $\beta\in\Gamma(\mm_A)$ leads to the contradiction $\alpha+\beta\in\Delta(\tau)\cap\Gamma(\mm_A)=\emptyset$ by Lemma~\ref{36}.\eqref{36c}, we must have $\beta=0$ and hence $\alpha\in\Delta(\tau)$.

\item There exists a minimal $m\le0$ such that $\bigcup_{m<j\le1}je_i+\Delta_i(\tau)\subseteq\Gamma(\mm_A)$.
In fact, $m\ge-\tau_i$ by Lemma~\ref{36}.\eqref{36a}.
By \eqref{19} and the hypothesis, $\delta+me_i+\NN^r\subseteq\Gamma(M_A)$ and hence $\alpha+\Gamma(M_A)\subseteq\Gamma(\mm_A)\subseteq\Gamma(M_A)$ for any $\alpha\in me_i+\Delta_i(\tau)\setminus\Gamma_A$ by Lemma~\ref{36}.\eqref{36a}.
This implies $t^\alpha\in\End_A(M_A)$ by Lemma~\ref{21}.

\end{asparaenum}
\end{proof}

\begin{proof}[Proof of Proposition~\ref{17}]
Assuming that $\Gamma(\mm_A)\subsetneq\Gamma(M_A)$, Lemma~\ref{24} applies followed by Lemma~\ref{25}.
The conclusions of the latter show that 
\[
\Gamma(\End_A(\mm_A))\subsetneq\Gamma(\End_A(M_A))
\]
and hence that $\End_A(\mm_A)\subsetneq\End_A(M_A)$ by Lemma~\ref{21}.
\end{proof}

\section*{Appendix: Kunz-Ruppert criterion}\label{32}

In the process of proving the implication \eqref{9d} $\Rightarrow$ \eqref{9a} in Theorem~\ref{9}, Kunz and Ruppert seem to claim (see \cite[page~6, line~2]{KR77}) and use that
\begin{equation}\label{33}
A\cdot t\p_t(A)\cong\mm_A
\end{equation}
as $A$-submodule of $\tilde A$.
The following is a counter-example for this statement.

By abuse of notation, we denote $\mm_{\wt A}:=\mm_{\wt A_1}\times\cdots\times\mm_{\wt A_r}$.

\begin{exa}
Consider the (non-quasihomogeneous) plane curve singularity defined by $x^4+xy^4+y^5=0$. 
After a coordinate change, the equation reads
\[
f:=x^4-y(x+y)^4=0.
\]
Then the normalization $A=k[[x,y]]/\ideal{f}\subset\wt A=k[[t]]$ is given by
\[
x=\frac{t^5}{1-t}=t^5+t^6+t^7+\cdots,\quad y=t^4.
\]
On the other hand, the left hand side of \eqref{33} considered modulo $\mm_{\wt A}^8\supset\mm_A\cdot t\p_t(A)$ is the $k$-vector space generated by the $7$-jets
\begin{equation}\label{23}
\wt x=t\p_t(x)=\eta\cdot x\equiv5t^5+6t^6+7t^7\mod\mm_{\wt A}^8,\quad\wt y=t\p_t(y)=4y,\quad
\end{equation}
where
\[
\eta:=\frac{5-4t}{1-t}.
\]
If there were an isomorphism \eqref{33} then, both sides being fractional ideals, it would have to be induced by multiplication by some unit $\eps\in\wt A^*$ with
\[
\eps\equiv\eta\equiv5+t+t^2\mod\mm_{\wt A}^3.
\]
Note that the $3$-jet
\[
\eps\equiv 5+t+t^2+\alpha t^3\mod\mm_{\wt A}^4,\quad\alpha\in k,
\]
determines the $7$-jet
\[
\eps\cdot y\equiv5t^4+t^5+t^6+\alpha t^7\mod\mm_{\wt A}^8.
\]
But for no choice of $\alpha$ this expression lies in the $k$-span of \eqref{23}.
Therefore, there is no isomorphism \eqref{33} for the curve under consideration.
\end{exa}

The following Proposition~\ref{34} contains the statement of \cite[Satz 2.2]{KR77}, which yields the implication \eqref{9d} $\Rightarrow$ \eqref{9a} in Theorem~\ref{9}.

\begin{rmk}
Let $\chi\in\Der_k(A)$.
By Scheja--Wiebe~\cite[(2.5)]{SW73}, $\chi(\pp_i)\subset\pp_i$ and hence $\chi$ induces a derivation $\chi_i\in\Der_k(A_i)$.
As $A_i$ is a domain, Seidenberg~\cite{Sei66} shows that $\chi_i$ lifts to a derivation $\wt\chi_i\in\Der_k(\wt A_i)$.
So by \eqref{11}, $\wt\chi:=(\wt\chi_1,\dots,\wt\chi_r)\in\Der_k(\wt A)$ is a lift of $\chi$.
As $\chi$ extend uniquely to any localization and hence to $L$, $\wt\chi$ is unique.
While this proves part 1) of \cite[Satz 2.2]{KR77}, it is actually not needed.
\end{rmk}

Recall \cite[page 168, Def.]{SW73}, that a derivation $\delta\in\Der_k(A)$ is called diagonalizable if $\mm_A$ is generated by eigenvectors of $\delta$.

\begin{prp}\label{34}
Any $\chi\in\Der_k(A)$ satisfying \eqref{9d} in Theorem~\ref{9} lifts uniquely to $\wt\chi\in\Der_k(\wt A)$ such that
\begin{equation}\label{37}
\wt\chi=\gamma\cdot t\p_t
\end{equation}
for some 
\begin{equation}\label{40}
\gamma\in k^r
\end{equation}
after a suitable coordinate change.
Moreover, $\chi$ is diagonalizable with non-zero eigenvalues on $\mm_A$ and can be chosen with eigenvalues in $\NN_+$.
\end{prp}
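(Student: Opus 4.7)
The existence and uniqueness of the lift $\tilde\chi\in\Der_k(\tilde A)$ have already been handled in the preceding Remark via Seidenberg's theorem, so the plan is to start from $\tilde\chi$ and analyze its shape on each factor $\tilde A_i=k[[t_i]]$.

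Writing $\tilde\chi_i=f_i(t_i)\partial_{t_i}$ for some $f_i\in k[[t_i]]$, I would pin down $\nu_i(f_i)=1$ by two valuation arguments applied to the induced derivation $\chi_i$ on $A_i=A/\pp_i$. If $\nu_i(f_i)=0$, then $\tilde\chi_i$ strictly decreases the $t_i$-adic valuation, so applied to an element of $A_i$ of minimal positive valuation $e_i$ (the multiplicity) it would produce an element of $A_i$ of valuation $e_i-1$, contradicting the minimality of $e_i$. If $\nu_i(f_i)\ge 2$, then $\tilde\chi_i$ strictly raises valuations, so $A_i\chi_i(A_i)$ consists only of elements of valuation exceeding $e_i$; since \eqref{9d} projects to $A_i\chi_i(A_i)=\mm_{A_i}$ along $A\onto A_i$, this misses elements of valuation $e_i$ and gives a contradiction. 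Hence $f_i=\gamma_i t_i g_i(t_i)$ with $\gamma_i\in k^*$ and $g_i(0)=1$, and the usual formal change of parameter $s_i=s_i(t_i)$ in $\tilde A_i$ normalizes this to $\gamma_i s_i\partial_{s_i}$, yielding \eqref{37} and \eqref{40}.

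For diagonalizability with non-zero eigenvalues, observe that $\tilde\chi=\gamma\cdot t\p_t$ is formally semisimple on each $\tilde A_i$ with monomial eigenvectors $t_i^n$ of eigenvalue $n\gamma_i$, hence on $\tilde A$. For $x\in A$, let $x=\sum_\lambda x^{(\lambda)}$ be its formal eigen-decomposition in $\tilde A$. I would show each $x^{(\lambda)}\in A$ by Lagrange interpolation: for each $N$, choose a polynomial $P_N(z)$ with $P_N(\lambda)=1$ that vanishes at all other eigenvalues of $\tilde\chi$ appearing in $x\bmod\mm_{\tilde A}^N$; then $P_N(\chi)(x)\in A$ approximates $x^{(\lambda)}$ modulo $\mm_{\tilde A}^N$. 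The limit argument relies on completeness of $A$ in the $\mm_A$-adic topology, together with the Artin--Rees-type equivalence of that topology with the one induced from $\tilde A$ (via finiteness of $\tilde A$ over $A$). This yields $A=\bigoplus_\lambda A^{(\lambda)}$, and since every $\gamma_i\ne 0$, all eigenvalues on $\mm_A$ are non-zero.

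For the last claim, the decomposition above presents $A$ as the closed span of eigenvectors whose $\tilde A$-components are pure monomials along each branch. Any multi-branch eigenvector in $A$ supported on some $S\subseteq\{1,\dots,r\}$ forces the ratios $\gamma_i/\gamma_j$ for $i,j\in S$ to be positive rationals; the constraints this imposes on an alternative weight vector $\gamma'\in(k^*)^r$ are thus linear with positive rational coefficients, so they admit a solution $\gamma'\in\NN_+^r$. The corresponding $\tilde\chi':=\gamma'\cdot t\p_t$ acts diagonally on the $\chi$-eigenvectors of $A$ by non-zero scalars, stabilizes $A$, and restricts to some $\chi'\in\Der_k(A)$ with eigenvalues in $\NN_+$ on $\mm_A$, still satisfying \eqref{9d}. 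The most delicate step is the Lagrange-interpolation limit: ensuring convergence in the correct topology on $A$ is where I expect the principal technical obstacle to lie.
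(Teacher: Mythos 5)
Your proof is correct and its skeleton matches the paper's: unique lift to the normalization, a valuation argument forcing $\nu_i(f_i)=1$, a coordinate change making $\gamma$ constant, and then diagonalization with positive integer eigenvalues. The differences lie in the technical means, so a brief comparison is in order. For the valuation step the paper argues exactly as you do (an element of minimal valuation together with condition \eqref{9d} of Theorem~\ref{9}); note only that when $e_i=1$ your phrase ``contradicting the minimality of $e_i$'' should read ``contradicting $\chi(A)\subseteq\mm_A$'', since an element of valuation $e_i-1=0$ is a unit and does not violate minimality of the positive values. For the linearization the paper invokes the Poincar\'e--Dulac decomposition $\delta=\sigma+\eta$ with $\eta$ nilpotent, hence zero in one variable, whereas you integrate the one-variable equation $s'/s=1/(t_ig_i(t_i))$ directly; in dimension one these amount to the same computation and yours is the more elementary phrasing. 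For diagonalizability the paper observes that the conductor $C_A=t^\delta\wt A$ is a $\wt\chi$-stable finite-codimensional subspace of $A$ and defers the details to Kunz--Ruppert, while you use Lagrange interpolation plus completeness. Your version works, but the step you flag as the principal obstacle is in fact immediate and needs neither a limit nor Artin--Rees: since $\mm_{\wt A}^N\subseteq C_A\subseteq A$ for $N\gg0$, one has $A+\mm_{\wt A}^N=A$, so the single congruence $P_N(\chi)(x)\equiv x^{(\lambda)}\bmod\mm_{\wt A}^N$ for one large $N$ already gives $x^{(\lambda)}\in A$. Your final positivity argument (eigenvectors of $A$ supported on several branches force $\gamma_i/\gamma_j\in\QQ_+$, so the weights can be rescaled componentwise into $\NN_+^r$ while preserving the eigenspace decomposition of $A$) is precisely the content of the Kunz--Ruppert reference and is correctly reproduced.
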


\begin{proof}
The $k$-derivation $\chi$ lifts uniquely to a $k$-derivation $\wt\chi=(\wt\chi_1,\dots,\wt\chi_r)\in\Der_k(L)$.
By finiteness of the normalization, $0\ne x_i\in\mm_{\wt A_i}$ for some $x=(x_1,\dots,x_r)\in\mm_A$.
Choosing $x\in\mm_A$ with $\nu_i(x_i)$ minimal yields $\nu_i(\wt\chi_i(x_i))\ge\nu_i(x_i)$.
By \eqref{9d} in Theorem~\ref{9}, equality holds for some such choice of $x\in\mm_A$.
Hence $\wt\chi_i=\gamma_i\cdot t_i\p_{t_i}$ for some $\gamma_i\in\wt A_i^*$ and \eqref{37} is obtained by setting $\gamma:=(\gamma_1,\dots,\gamma_r)\in\wt A^*$.

In order to achieve \eqref{40} by a coordinate change, 
apply the Poincar\'e--Dulac decomposition theorem (see \cite[Ch.~3.~\S3.2]{AA88} or \cite[Satz~3]{Sai71}) to $\delta=\wt\chi_i$.
Then
\[
\delta=\sigma+\eta,\quad\sigma=\gamma(0)\cdot t\p_t,\quad\eta\in\End_k(\mm_{\wt A_i}/\mm_{\wt A_i}^2)\text{ nilpotent},\quad [\sigma,\eta]=0,
\]
and hence $\eta=0$.
Finally, the conductor is a $\wt\chi$-invariant (see \eqref{20}) finite-codimensional $k$-subspace of $\wt A$ contained in $\mm_A$ which yields the last statement (see \cite[page 7]{KR77} for details).
\end{proof}

\bibliographystyle{amsalpha}
\bibliography{qhcend}
\end{document}